\documentclass[11pt]{article}
\usepackage{amsmath, array, amstext, amsthm, amssymb, amsfonts, latexsym, amscd, mathtools}
\DeclarePairedDelimiter\floor{\lfloor}{\rfloor}
\usepackage{authblk}
\usepackage{tikz}
\usetikzlibrary{shadings,patterns,decorations.pathreplacing}
\newcolumntype{C}{>{$}c<{$}}
\newtheorem{theorem}{Theorem}[section]

\newtheorem{lemma}[theorem]{Lemma}
\newtheorem{proposition}[theorem]{Proposition}
\newtheorem{definition}[theorem]{Definition}
\newtheorem{remark}[theorem]{Remark}

\setlength{\textwidth}{6.32in}
\setlength{\textheight}{9.42in}
\setlength{\topmargin}{-0.01in}
\setlength{\headsep}{0pt}
\setlength{\headheight}{0pt}
\setlength{\oddsidemargin}{0pt}
\setlength{\evensidemargin}{0pt}

\title{\bf On vertex peripherians and Wiener index of graphs with fixed number of cut vertices}
\author{Dinesh Pandey} 
\affil {Lazaridis School of Business and Economics, Wilfrid Laurier University, 75 University Avenue West, Waterloo, Ontario, N2L3C5, Canada. \\
Email: dpandey@wlu.ca}
\date{}

\begin{document}

\maketitle
\begin{abstract}
The distance of a vertex in a graph is the sum of distances from that vertex to all other vertices of the graph. The Wiener index of a graph is the sum of distances between all its unordered pairs of vertices. A graph has been obtained that contains a vertex achieving the maximum distance among all graphs on $n$ vertices with fixed number of cut vertices. Further the graphs  having maximum Wiener index among all graphs on $n$ vertices with at most $3$ cut vertices have been characterised.\\

\noindent {\bf Key words:} cut vertex, distance, Wiener index, extremal problems  \\

\noindent {\bf AMS subject classification.}  05C09, 05C12, 05C35
\end{abstract}

\section{Introduction}\label{Intro}
Throughout the paper the graphs are simple, finite, connected and undirected. Let $G$ be a graph with vertex set $V(G)$ and the edge set $E(G)$. The edge joining the vertices $u$ and $v$ of $G$ is denoted by $uv$. A {\it cut vertex} in $G$ is a vertex whose removal makes the graph disconnected. A vertex of degree one in $G$ is called a {\it pendant vertex}. An edge containing a pendant vertex of $G$ is called a {\it pendant edge}. For two isomorphic graphs $G_1$ and $G_2$, we use the notation $G_1\cong G_2$. The path and the cycle on $n$ vertices are denoted by $P_n$ and $C_n$, respectively. The complete bipartite graph $K_{1,n-1}$ is called a {\it star} on $n$ vertices. A {\it block} in $G$ is a maximal $2-connected$ subgraph of $G$. A {\it pendant block} of $G$ is a block containing exactly one cut vertex of $G$. Two blocks of $G$ are {\it adjacent} if they share a cut vertex.  For $u,v\in V(G)$, the distance $d_G(u,v)$ or $d(u,v)$ is the number of edges in a shortest path joining $u$ and $v$. The {\it distance} $D_G(v)$ of the vertex $v$ in $G$ is defined as $D_G(v)=\sum_{u\in V(G)}{d(v,u)}$. A vertex having maximum distance in $G$ is called a {\it peripherian vertex} of $G$ (see \cite{Zelinka}, page 93).  The {\it Wiener index} of $G$ is defined as the sum of distances between all its unordered pairs of vertices and denoted by $W(G)$. From this definition it follows that $W(G)=\frac{1}{2}\sum_{v\in V(G)}{D_G(v)}$. Other terminologies which are used in this article and not defined here can be found in \cite{West}.

The Wiener index is the most studied topological index in graph theory. It was introduced by the Chemist H. Wiener in \cite{Wiener} in $1947$. In mathematical literature, \cite{Entringer} seems to be the first paper studying Wiener index. The graphs having extremal (maximal or minimal) Wiener index among various classes of graph have been studied  extensively in last two decades. Two such recent studies can be seen in \cite{Czabarka} and \cite{Dankelmann}.  Over certain classes of graphs, the problem of maximizing the Wiener index seems comparatively difficult than the corresponding minimization problem. We denote the set of all connected graph on $n$ vertices with $k$ cut vertices by $\mathfrak{C}_{n,k}$. In \cite{Pandey}, the graphs having minimum Wiener index in $\mathfrak{C}_{n,k}$ have been characterised. In this paper we find a graph containing a vertex peripherian which attains maximum distance among all vertex peripherians in $\mathfrak{C}_{n,k}$. Further in the obtained graph we identify the vertex  peripherians. Using this, we characterise the graphs having maximum Wiener index in $\mathfrak{C}_{n,k}$, $0\leq k\leq 3$. The problem remains open for graphs having more than $3$ cut vertices.

The paper is organised in the following way. In section \ref{Preliminaries}, some results from the literature and some expressions for distance of a vertex and Wiener index of  some specific graphs have been presented. In Section \ref{Special blocks}, special kind of  pendant vertices and pendant blocks are introduced which are useful for the study. Section \ref{Distance} presents the vertex peripherians which achieve maximum distances among all vertex peripherians in $\mathfrak{C}_{n,k}$. In Section \ref{Wiener index}, the graphs having maximum Wiener index in $\mathfrak{C}_{n,k}$ for $0\leq k \leq 3$ have been characterised.

 \section {Preliminaries}\label{Preliminaries}

The following lemma shows the effect of deleting an edge on the distance of a vertex and on the Wiener index of  a graph, and they follow from the definitions.
\begin{lemma} \label{edge effect}
Let $G$ be a graph and $e\in E(G)$ such that $G-e$ is connected. Then 
\begin{enumerate}
\item[$(i)$] for any $v\in V(G)$, $D_{G-e}(v)\geq D_G(v)$.
\item[$(ii)$] $W(G-e)> W(G)$.
\end{enumerate}
\end{lemma}

If $|V(G)|=n$, then $G$ has at most $n-2$ cut vertices and $P_n$, $n>2$ is the only graph with $n-2$ cut vertices. So we consider $\mathfrak{C}_{n,k}$ where $0\leq k\leq n-3$. Let $G$ be a graph and $w$ be a cut vertex of $G$. Then there always exist two subgraphs $G_1$ and $G_2$ (both on at least $2$ vertices) such that $G\cong G_1\cup G_2$ and $V(G_1)\cap V(G_2)=\{w\}$. The next lemma is frequently used in counting the distance of a vertex and the Wiener index of a graph, in $\mathfrak{C}_{n,k}, k\geq 1$.

\begin{lemma} [\cite{Bsv}, Lemma 1.1]\label{count}
Let $G$ be a graph and $w$ be a cut vertex in $G$. Let $G_1$ and $G_2$ be two subgraphs of $G$ such that $G\cong G_1 \cup G_2$ and $V(G_1) \cap V(G_2)= \{w\}$. Then
\begin{enumerate}
\item[$(i)$] \label{count-1} for any $v\in V(G_1)$, $D_G(v)=D_{G_1}(v)+(|V(G_2)|-1)d(v,w)+D_{G_2}(w)$ and
\item[$(ii)$] \label{count-2}$W(G)=W(G_1)+W(G_2)+(|V(G_1)|-1)D_{G_2}(w)+(|V(G_2)|-1)D_{G_1}(w)$.
\end{enumerate}
\end{lemma}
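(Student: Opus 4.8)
The plan is to reduce everything to one structural observation about the cut vertex $w$ and then to split the relevant sums according to which side of $w$ each vertex lies on. Write $A = V(G_1)\setminus\{w\}$ and $B = V(G_2)\setminus\{w\}$, so that $V(G)$ is the disjoint union $A \sqcup \{w\} \sqcup B$. Since $w$ is a cut vertex separating $A$ from $B$, every $v$--$u$ path with $v \in A$ and $u \in B$ must pass through $w$; consequently $d_G(v,u) = d_G(v,w) + d_G(w,u)$. Moreover a shortest path between two vertices of $G_1$ never needs to leave $G_1$ (leaving would force two passages through $w$), so $d_G$ restricted to $V(G_1)$ agrees with $d_{G_1}$, and likewise for $G_2$. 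I regard verifying this additivity-across-$w$ claim as the only genuine content of the proof; once it is in hand the rest is bookkeeping.

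For part $(i)$, fix $v \in V(G_1)$ and split the defining sum $D_G(v) = \sum_{u \in V(G)} d_G(v,u)$ over $V(G_1)$ and $B$. The terms with $u \in V(G_1)$ sum to $D_{G_1}(v)$ by the restriction property. For $u \in B$ the additivity gives $d_G(v,u) = d(v,w) + d_{G_2}(w,u)$, so these terms contribute $(|V(G_2)|-1)\,d(v,w) + \sum_{u\in B} d_{G_2}(w,u)$; the last sum is exactly $D_{G_2}(w)$, since the omitted term $u=w$ is zero. Adding the two pieces yields the claimed formula.

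For part $(ii)$ I would use $W(G) = \sum_{\{v,u\}} d_G(v,u)$ and partition the unordered pairs into three classes: pairs inside $V(G_1)$, pairs inside $V(G_2)$, and cross pairs with one endpoint in $A$ and the other in $B$. Here the vertex $w$ lies in both $V(G_1)$ and $V(G_2)$, but the pairs $\{w,a\}$ with $a\in A$ are counted only in $W(G_1)$ and the pairs $\{w,b\}$ with $b\in B$ only in $W(G_2)$, so there is no double counting. The first two classes contribute $W(G_1)$ and $W(G_2)$. For the cross pairs, additivity gives
\[
\sum_{v\in A}\sum_{u\in B} \bigl( d_{G_1}(v,w) + d_{G_2}(w,u) \bigr) = |B|\,D_{G_1}(w) + |A|\,D_{G_2}(w),
\]
and substituting $|A| = |V(G_1)|-1$ and $|B| = |V(G_2)|-1$ gives the stated expression. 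Alternatively, one could sum the identity from part $(i)$ over all $v\in V(G)$ and halve, which routes through the same additivity fact; either way the only step requiring care is the claim that $d_G$ splits additively at $w$, which is where I would concentrate the argument.
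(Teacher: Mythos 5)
Your proof is correct, and it identifies the right key point: once you know that $d_G$ restricted to $V(G_i)$ coincides with $d_{G_i}$ and that $d_G(v,u)=d_{G_1}(v,w)+d_{G_2}(w,u)$ for $v\in V(G_1)\setminus\{w\}$, $u\in V(G_2)\setminus\{w\}$, both formulas follow by splitting the sums, and your bookkeeping (including the non--double-counting of pairs involving $w$) is accurate. Note that the paper itself gives no proof of this lemma --- it is quoted from [Bsv, Lemma~1.1] --- so there is nothing internal to compare against; your argument is the standard one for this folklore decomposition and can stand as a self-contained proof.
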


A graph is called minimally $2$-connected if it is $2$-connected and deleting any edge gives a graph which is not $2$-connected.
\begin{lemma}[\cite{Dirac}, Theorem 2] \label{min 2-connected}
A minimally $2$-connected graph with more than $3$ vertices is triangle free.
\end{lemma}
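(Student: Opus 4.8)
The plan is to argue by contradiction: assume $G$ is minimally $2$-connected with $|V(G)|\geq 4$ yet contains a triangle on vertices $a,b,c$, and extract a contradiction from minimality. The strategy is to record what deleting a single triangle edge does, note that this must be the same for all three edges, and then play the three resulting structural facts against one another.

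First I would pin down the local effect of deleting one edge. Since a $2$-connected graph on at least three vertices has no bridge, $G-ab$ is connected; by minimality it fails to be $2$-connected, so it has a cut vertex $x$. As $G-x$ is connected while $(G-x)-ab$ is not, the edge $ab$ is a bridge of $G-x$, whence $x\notin\{a,b\}$ and $x$ separates $a$ from $b$ in $G-ab$. But the path $a\,c\,b$ survives in $G-ab$, so $x$ lies on it and $x=c$. Thus deleting a triangle edge always exposes the opposite vertex as a cut vertex; equivalently, $ab$ is a bridge of $G-c$, and by symmetry $bc$ is a bridge of $G-a$ and $ca$ is a bridge of $G-b$. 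I expect this to be the main obstacle, since everything afterwards is bookkeeping with the three bridge splittings.

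Next I would exploit the three bridge conditions together. Writing $G-c=A\cup\{ab\}\cup B$ with $a\in A$, $b\in B$ and $ab$ the only edge across, I observe that if both sides were singletons then $V(G)=\{a,b,c\}$, contradicting $|V(G)|\geq 4$; this is precisely where the hypothesis $|V(G)|>3$ enters. So, up to relabelling $a$ and $b$, the vertex $b$ has a neighbour $b'\in B$ with $b'\notin\{a,b,c\}$, reached from $b$ by a path inside $B$ that avoids $a$ and $c$. The crux is then to locate $b'$ in the decomposition $G-b=A_2\cup\{ca\}\cup C_2$ (with $a\in A_2$, $c\in C_2$): no $b'$-to-$a$ path can avoid both $b$ and $c$, because in $G-c$ every route from $B$ to $A$ uses the bridge $ab$ and hence meets $b$; since $A_2$ would supply exactly such a path, this rules out $b'\in A_2$ and forces $b'\in C_2$, giving a $b'$-to-$c$ path that avoids both $a$ and $b$.

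Finally I would collide these two paths inside $G-a=B_1\cup\{bc\}\cup C_1$, where $bc$ is a bridge with $b\in B_1$ and $c\in C_1$. The $b'$-to-$b$ path avoids $a$ and $c$, so it misses the bridge $bc$ and places $b'\in B_1$; the $b'$-to-$c$ path avoids $a$ and $b$, so it too misses $bc$ and places $b'\in C_1$. As $B_1$ and $C_1$ are distinct components of $G-a-bc$, this is the desired contradiction, so $G$ is triangle-free. The one subtlety to treat carefully is the ``up to relabelling'' step guaranteeing a nontrivial side, which is exactly the point at which $|V(G)|>3$ is used; the rest is routine tracking of which vertices each path is allowed to touch.
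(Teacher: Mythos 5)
Your proof is correct, but there is nothing in the paper to compare it against: the paper does not prove this lemma at all, it simply imports it as Theorem 2 of Dirac's 1967 paper on minimally $2$-connected graphs. Checking your argument step by step: deleting a triangle edge $ab$ cannot disconnect a $2$-connected graph; minimality then gives a cut vertex $x$ of $G-ab$; the $2$-connectivity of $G$ forces $x\notin\{a,b\}$, makes $ab$ a bridge of $G-x$, and the surviving path $a\,c\,b$ pins $x=c$; the three symmetric bridge decompositions of $G-c$, $G-a$ and $G-b$ then collide exactly as you describe, with the hypothesis $|V(G)|>3$ entering precisely where you flag it (to guarantee a vertex $b'$ outside the triangle). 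This is sound. For comparison, the usual route in the literature (essentially Dirac's) proves the stronger statement that no cycle of a minimally $2$-connected graph has a chord --- because deleting a chord of a cycle preserves $2$-connectivity --- and then exhibits a triangle edge as a chord of a cycle through a fourth vertex via the fan lemma, so that triangle-freeness falls out as a corollary. Your first paragraph is in effect the chord-deletion argument specialized to the path $a\,c\,b$, and your last two paragraphs replace the Menger/fan-lemma step by elementary component bookkeeping: what you gain is a self-contained proof using nothing beyond the definitions of cut vertex and bridge; what you give up is the stronger chord-free characterization that the standard approach delivers along the way.
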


Let $G$ has maximum Wiener index over $\mathfrak{C}_{n,k}$, $n\geq 4$. Then by Lemma  \ref{edge effect}, blocks of $G$ are minimally $2$-connected and hence by Lemma \ref{min 2-connected} the blocks of size more than $3$ are triangle free. Let $B$ be a block of size $3$ in $G$ containing the cut vertex $w$ of $G$ . Then $B$ is isomorphic to a triangle $wxy$.  If $B$ contains exactly one cut vertex $w$, then $G-xy \in \mathfrak{C}_{n,k}$ and by Lemma \ref{edge effect} $(ii)$,  $W(G-xy)> W(G)$, a contradiction. If $B$ contains two cut vertices say $w$ and $x$, then $G-wy \in \mathfrak{C}_{n,k}$ and $W(G-wy)> W(G)$, a contradiction. If all three vertices $w, x$ and $y$ are cut vertices in $G$, then for any $e\in \{wx, xy, yw \}$, $G-e \in \mathfrak{C}_{n,k}$ and $W(G-e)> W(G)$, a contradiction. 

Similarly it can be shown that if $G\in \mathfrak{C}_{n,k}$, $n\geq 4$ and $v_0\in V(G)$ such that $D_G(v_0)=\max\{D_G(v): v\in V(G)\}$, then there exists a triangle free graph $G'\in \mathfrak{C}_{n,k}$ obtained by removing some edges (if necessary) from $G$ such that $D_{G'}(v_0)\geq D_G(v_0)$. So, we conclude the following.\\
\begin{remark}\label{Remark_WI}
If $G$ has maximum Wiener index over $\mathfrak{C}_{n,k}$, $n\geq 4$, then $G$ is triangle free.
\end{remark}

\begin{remark}\label{Remark_Distance}
Among all $G'$ satisfying  $D_{G'}(v_0)=\max\{D_G(v): G\in \mathfrak{C}_{n,k}, n\geq 4, v\in V(G)\}$, there exists one which is triangle free.
\end{remark}

We now recall the distance of vertices  and the Wiener indices of some known graphs which will be used in later sections. First, consider the path $P_n: v_1v_2\ldots v_n$. Then for the vertex $v_i , 1\leq i\leq n$, 
\begin{equation}\label{Path_vertex}
D_{P_n}(v_i)=\frac{i(i-1)}{2}+\frac{(n-i)(n-i-1)}{2}\leq \frac{n(n-1)}{2}=D_{P_n}(v_1)=D_{P_n}(v_n)
\end{equation}
and 
$$W(P_n)={n+1\choose3}.$$
 By $L_{n,g}$, we denote the graph in $\mathfrak{C}_{n,n-g}$ obtained by identifying a pendant vertex of $P_{n-g+1}$ with a vertex of $C_g$ (see Figure \ref{graph-Lnk}). Note that $L_{n,n-g}$ has $g$ cut vertices. The graph $C_{m_1,m_2}^n$ is defined for $n\geq m_1+m_2-1$ and $m_1, m_2\geq 3$ in \cite{Pandey}. For $n\geq m_1+m_2$, $C_{m_1,m_2}^n$ is the graph obtained by identifying one pendant vertex of the path $P_{n+2-(m_1+m_2)}$ with a vertex of $C_{m_1}$ and the other pendant vertex of $P_{n+2-(m_1+m_2)}$ with a vertex of $C_{m_2}$. For $n=m_1+m_2-1$, $C_{m_1,m_2}^n$ is the graph obtained by identifying a vertex of $C_{m_1}$ with a vertex of $C_{m_2}$ (see Figure \ref{graph_Cmn}). Note that $C_{m_1, m_2}^n$ has $n+2-(m_1+m_2)$ cut vertices. 

\begin{figure}[h!]
\begin{center}
\begin{tikzpicture}[scale=.7]
\draw (0,0) circle [radius= 1cm];
\filldraw (1,0) circle [radius=.5mm];
\filldraw (2,0) circle [radius= .5mm];
\filldraw (3.3,0) circle [radius=.5mm];
\filldraw (4.3,0) circle [radius= .5mm];
\draw [dash pattern=on 1pt off 2pt] (2,0)--(3.3,0);
\draw (1,0)--(2,0);
\draw(3.3,0)--(4.3,0);
\draw(0,0) node {$C_g$};
\draw [decorate,decoration={brace,amplitude=5pt,mirror},xshift=2pt,yshift=0pt]
(0.85,-0.2) -- (4.3,-0.2) node [black,midway,yshift=-0.4cm]
{\footnotesize $P_{n-g+1}$};
\end{tikzpicture}
\caption{The graph $L_{n, g}$}\label{graph-Lnk}
\end{center}
\end{figure}
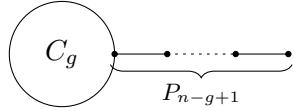

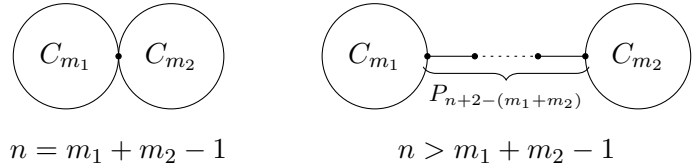
\begin{figure}[h!]
\begin{center}
\begin{tikzpicture}[scale =.7]
\draw (0,0)circle [radius= 1cm];
\draw (2,0) circle [radius= 1cm];
\draw  (1,-1.8) node {$n=m_1+m_2-1$};
\draw (0,0) node {$C_{m_1}$};
\draw (2,0) node {$C_{m_2}$};
\filldraw(1,0) circle [radius=.5mm];
\end{tikzpicture}
\hskip 1cm
\begin{tikzpicture}[scale=.7]
\draw (0,0)circle [radius= 1cm];
\draw (5,0) circle [radius= 1cm];
\filldraw (1,0)circle [radius= .5 mm];
\filldraw (1.9,0) circle [radius= .5 mm];
\filldraw (3.1,0)circle [radius= .5 mm];
\filldraw (4,0) circle [radius= .5 mm];
\draw [dash pattern= on 1pt off 2pt] (1.9,0)--(3.1,0);
\draw (1,0)--(1.9,0);
\draw(3.1,0)--(4,0);
\draw  (2.5,-1.8) node {$n>m_1+m_2-1$};
\draw (0,0) node {$C_{m_1}$};
\draw (5,0) node {$C_{m_2}$};
\draw [decorate,decoration={brace,amplitude=5pt,mirror},xshift=2pt,yshift=0pt]
(0.85,-0.2) -- (4,-0.2) node [black,midway,yshift=-0.4cm]
{\footnotesize $P_{n+2-(m_1+m_2)}$};
\end{tikzpicture}
\caption{The graphs $C_{m_1,m_2}^n$}\label{graph_Cmn}
\end{center}
\end{figure}

\begin{lemma}[\cite{Pandey}, Lemma 3.5] \label{c-cmn}
Let $m_1,m_2 \geq 3$ be two integers and let $n=m_1+m_2-1$. Then $W(C_n)>W(C_{m_1,m_2}^n).$
\end{lemma}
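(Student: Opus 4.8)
The plan is to reduce the comparison to a direct polynomial computation, using the vertex-transitivity of cycles together with Lemma \ref{count}$(ii)$. Since $C_{m_1,m_2}^n$ (with $n=m_1+m_2-1$) is $C_{m_1}$ and $C_{m_2}$ glued at a single cut vertex $w$, I would first record that every vertex of a cycle $C_m$ has the same distance, namely $D_{C_m}=\frac{m^2}{4}$ or $\frac{m^2-1}{4}$ according as $m$ is even or odd, and that $W(C_m)=\frac{m}{2}D_{C_m}$ (this last identity being the instance of $W(G)=\frac12\sum_v D_G(v)$ for a vertex-transitive graph). Applying Lemma \ref{count}$(ii)$ with $G_1=C_{m_1}$, $G_2=C_{m_2}$ and common vertex $w$ gives
$$W(C_{m_1,m_2}^n) = W(C_{m_1}) + W(C_{m_2}) + (m_1-1)D_{C_{m_2}} + (m_2-1)D_{C_{m_1}}.$$

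To keep the parities uniform I would write $D_{C_m}=\frac{m^2-\epsilon_m}{4}$ with $\epsilon_m\in\{0,1\}$ (zero for even $m$, one for odd $m$), so that $8W(C_m)=m^3-m\epsilon_m$ and likewise $8W(C_n)=n^3-n\epsilon_n$. Substituting into the displayed identity and setting $s=m_1+m_2=n+1$ and $p=m_1m_2$, a short expansion reduces everything to
$$8\big(W(C_n)-W(C_{m_1,m_2}^n)\big)=\underbrace{\big(p(s-4)-s^2+3s-1\big)}_{M}+\underbrace{\big(\epsilon_1(m_1+2m_2-2)+\epsilon_2(m_2+2m_1-2)-(s-1)\epsilon_n\big)}_{E}.$$
It then remains to show $M+E>0$ for all $m_1,m_2\ge 3$.

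For the polynomial part, since $s\ge 6$ we have $s-4\ge 2>0$, so $M$ is increasing in $p$; minimizing $p=m_1m_2$ subject to $m_1+m_2=s$ and $m_i\ge 3$ (the minimum being attained at $m_1=3$) yields $M\ge 2s^2-18s+35$. For the correction term the only possibly negative contribution is $-(s-1)\epsilon_n\ge-(s-1)$, while the other two summands are nonnegative, so $E\ge-(s-1)$. Combining these gives $M+E\ge 2s^2-19s+36$, which is positive for $s\ge 7$. The single remaining value $s=6$ forces $m_1=m_2=3$, and here a direct evaluation gives $W(C_5)=15>14=W(C_{3,3}^5)$, completing the argument.

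The main obstacle I anticipate is the parity bookkeeping together with the fact that the gap is genuinely tiny in the extreme case: for $m_1=m_2=3$ the difference is only $1$, so the estimate on $M$ alone (which is actually $-1$ there) does not suffice, and the correction $E$ must be tracked with exact constants rather than merely to leading order. Organizing the parity of $n=m_1+m_2-1$ against those of $m_1,m_2$ (so that $\epsilon_n=1$ exactly when $m_1$ and $m_2$ have equal parity) is what makes $E$ tractable, and verifying that the worst case $E=-(s-1)$ (both $m_i$ even, which forces $s\ge 8$) is still dominated by $M$ is the crux of the estimate.
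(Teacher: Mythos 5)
Your proof is correct: the identity $8\bigl(W(C_n)-W(C_{m_1,m_2}^n)\bigr)=p(s-4)-s^2+3s-1+E$ checks out, the bound $M+E\geq 2s^2-19s+36>0$ holds for $s\geq 7$, and the tight case $m_1=m_2=3$ (where indeed $W(C_5)=15>14$) is correctly settled by direct evaluation. This is essentially the same approach as the source: the paper itself does not reprove the lemma (it cites Lemma 3.5 of \cite{Pandey}), but the method used there and in the paper's analogous comparisons (e.g.\ Lemma \ref{Cmn-Lnk}) is exactly yours — the cut-vertex decomposition of Lemma \ref{count}$(ii)$ plus the cycle formulas — with your only deviation being that you compress the four parity cases into $\epsilon$-bookkeeping in the symmetric functions $s,p$ rather than writing them out separately.
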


 The graphs $L_{n,g}$ and $C_{m_1,m_2}^n$ are important for us. So we recall the known expressions for the distance of a vertex in these graphs and for their Wiener indices. Wiener index of $L_{n,g}$ can be found in \cite{Yu} (see Theorem 1.1) as following. 
\begin{equation}\label{WI_Lnk}
W(L_{n,g})=\begin{cases} 
\frac{g^3}{8}+(n-g)(\frac{n^2+ng+3g-1}{6}-\frac{g^2}{12})  &\mbox{ if g is even}, \\ 
\frac{g(g^2-1)}{8}+(n-g)(\frac{n^2+ng+3g-1}{6}-\frac{g^2}{12}-\frac{1}{4})  &\mbox{ if g is odd}.
\end{cases}
\end{equation}
Also the distance of the pendant vertex $v$ in $L_{n,g}$ can be computed using Lemma \ref {count} ($i$) as
\begin{equation}\label{D_Lnk-p}
D_{L_{n,g}}(v)=
\begin{cases}
\frac{g^2}{4}+\frac{(n-g)(n+g-1)}{2}~~~&\mbox{ if $g$ is even},\\
\frac{g^2-1}{4}+ \frac{(n-g)(n+g-1)}{2}~~~&\mbox{ if $g$ is odd}.
\end{cases}
\end{equation}

The Wiener index of $C_{m_1,m_2}^n$ when $n=m_1+m_2-1$ is computed in \cite{Pandey} (see proof of Lemma 3.5 in \cite{Pandey}). We generalise that by counting the Wiener index of $C_{m_1, m_2}^n\in \mathfrak{C}_{n,k}$ i.e. when $n=m_1+m_2+k-2$. Let $w$ be the cut vertex of $C_{m_1, m_2}^n$ lying in $C_{m_1}$. Using Lemma \ref{count} ($ii$)
$$W(C_{m_1,m_2}^n)=W(C_{m_1})+W(L_{m_2+k-1, m_2})+(m_2+k-2)D_{C_{m_1}}(w)+(m_1-1)D_{L_{m_2+k-1,m_2}}(w).$$
From (\ref{WI_Lnk}), we get
{\small
\begin{align*}
W(L_{m_2+k-1,m_2})&=\begin{cases}
				       \frac{1}{24}(3m_2^3+6m_2^2k+4k^3+12m_2k^2-12k^2-6m^2-12m_2k+8k)\;&\mbox{ $m_2$ even},\\
				        \frac{1}{24}(3m_2^3+6m_2^2k+4k^3+12m_2k^2-12k^2-6m^2-12m_2k-3m_2+2k+6) &\mbox{ $m_2$ odd}.
                                        \end{cases}
\end{align*}}
Further, from (\ref{D_Lnk-p}), we get

\begin{align*}
D_{L_{m_2+k-1,m_2}}(w)=\begin{cases}
				\frac{1}{4}(m_2^2+4m_2k+2k^2-6k-4m_2+4) \;&\mbox{ $m_2$ even},\\
				\frac{1}{4}(m_2^2+4m_2k+2k^2-6k-4m_2+3) \;&\mbox{ $m_2$ odd}.\\
				 \end{cases}
\end{align*}
Performing some intricate calculations, the Wiener index of $C_{m_1,m_2}^n$ can be obtained as follows. \\

$W(C_{m_1, m_2}^n)$
{\small
\begin{align}\label{n=m_1+m_2+k-2}
=&\begin{cases}
	                    \frac{1}{8}(m_1^3+m_2^3+2m_1^2m_2+2m_1m_2^2+2m_1^2k+2m_2^2k+4m_1k^2+4m_2k^2+8m_1m_2k\\
\vspace{.1 cm}			-4m_1^2-4m_2^2- 8m_1m_2- 12m_1k- 12m_2k- 8k^2 +8m_1+ 8m_2-8)+\frac{1}{6}(k^3+11k) &\mbox{both $m_1, m_2$ even},\\
	                   \frac{1}{8}(m_1^3+m_2^3+2m_1^2m_2+2m_1m_2^2+2m_1^2k+2m_2^2k+4m_1k^2+4m_2k^2+8m_1m_2k\\
\vspace{.1 cm}			-4m_1^2-4m_2^2- 8m_1m_2- 12m_1k- 12m_2k- 8k^2 +6m_1+ 7m_2-4)+\frac{1}{12}(2k^3+19k) &\mbox{$m_1$ even , $m_2$ odd},\\
                          \frac{1}{8}(m_1^3+m_2^3+2m_1^2m_2+2m_1m_2^2+2m_1^2k+2m_2^2k+4m_1k^2+4m_2k^2+8m_1m_2k\\
\vspace{.1 cm}			-4m_1^2-4m_2^2- 8m_1m_2- 12m_1k- 12m_2k- 8k^2 +7m_1+ 6m_2-4)+\frac{1}{12}(2k^3+19k) &\mbox{$m_1$ odd, $m_2$ even},\\
	                  \frac{1}{8}(m_1^3+m_2^3+2m_1^2m_2+2m_1m_2^2+2m_1^2k+2m_2^2k+4m_1k^2+4m_2k^2+8m_1m_2k\\
			                -4m_1^2-4m_2^2- 8m_1m_2- 12m_1k- 12m_2k- 8k^2 +5m_1+ 5m_2)+\frac{1}{6}(k^3+8k) &\mbox{both $m_1, m_2$ odd}.
\end{cases}
\end{align}}

\section{s-pendant vertices and s-pendant blocks} \label{Special blocks}
We introduce a special kind of pendant blocks and pendant vertices in $\mathfrak{C}_{n,k}$, $k\geq 2$, which are used later in Section \ref{Wiener index}. 

\begin{definition}
Let $G$ be a graph with at least two cut vertices.  An {\bf s-pendant block} of $G$ is a pendant block which shares its cut vertex with exactly one non-pendant block of $G$. If size of an s-pendant block is $2$, then it is called an {\bf s-pendant edge}.
A pendant vertex lying on an s-pendant edge is called an  {\bf s-pendant vertex}. 
\end{definition}

In Figure \ref{s-pendant}, $B_2$ and $e_4$ are non pendant blocks. $B_1$ is an s-pendant block, $v_3$ is an s-pendant vertex and $e_3$ is an s-pendant edge. $B_3$ is a pendant block but not s-pendant, $v_1, v_2$ are pendant vertices but not s-pendant and $e_1, e_2$ are pendant edges but not s-pendant.
\begin{figure}[h!]
\begin{center}
\begin{tikzpicture}
\filldraw(0,0) circle [radius=.5mm]--(1,1) circle [radius=.5mm]--(2,0) circle [radius=.5mm]--(3,1) circle [radius=.5mm]--(4,1) circle [radius=.5mm]--(5,0) circle [radius=.5mm]--(6,1) circle [radius=.5mm]--(7,0) circle [radius=.5mm];
\filldraw(1,-1) circle [radius=.5mm] (3,-1) circle [radius=.5mm] (4,-1) circle [radius=.5mm] (5,0) (6,-1) circle [radius=.5mm](7,0)  (4.7,1.2) circle [radius=.5mm](5.3,1.2) circle [radius=.5mm](5,-1) circle [radius=.5mm](5.5,-2) circle [radius=.5mm];
\draw(0,0)--(1,-1)--(2,0)--(3,-1)--(4,-1)--(5,0)--(6,-1)--(7,0) (4.7,1.2)--(5,0)--(5.3,1.2) (5,0)--(5,-1)--(5.5,-2);
\draw (1,0) node {$B_1$} (3.5,0) node {$B_2$} (6,0) node {$B_3$} (4.7,1. 4) node {$v_1$} (5.5,1.4) node {$v_2$} (5.5,-2.2) node {$v_3$} (4.65,.8) node {$e_1$} (5.4,.8) node {$e_2$} (5.4,-1.5) node {$e_3$} (5.2,-0.7) node{$e_4$};
\end{tikzpicture}
\caption{s-pendant blocks and s-pendant vertices in a graph} \label{s-pendant}
\end{center}
\end{figure}
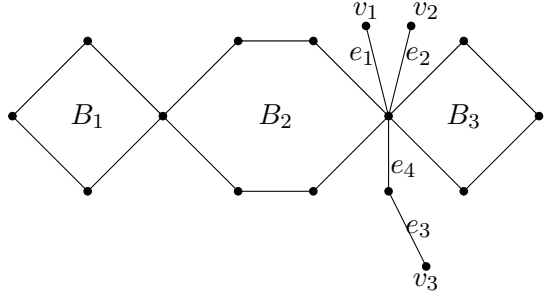

Let $G$ be a graph with $k\geq 2$ cut vertices. Let $w$ be a cut vertex and $B_1, B_2,\ldots, B_r$ be all the s-pendant blocks sharing the cut vertex $w$. Then the graph obtained from $G$ by detaching all the s-pendant blocks from $w$ i.e. $(G\setminus \cup_{i=1}^r{B_i})\cup w$  has $k-1$ cut vertices. The distance $d_G(B,B')$ or $d(B,B')$ between two blocks $B$ and $B'$ of $G$ is defined as $d_G(B,B')=\min\{d(u,v): u\in V(B), v\in V(B')\}$.
\begin{lemma}[\cite{Pandey1}, Lemma 2.5] \label {block_distance}
If $d(B_1,B_2)=\max\{d(B,B'): B, B' \mbox{ are blocks of}\; $G$\}$, then both $B_1$ and $B_2$ are pendant blocks in $G$.
\end{lemma}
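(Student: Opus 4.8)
The plan is to argue by contradiction, exploiting the basic fact that a cut vertex lies on every path between the two sides it separates, so that distances in $G$ decompose additively along its block--cut-vertex structure; this is the same principle underlying Lemma \ref{count}. Throughout I assume $G$ has at least two blocks, since otherwise $G$ is $2$-connected, has no cut vertex, and no block is pendant, so the statement does not arise.

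First I would reduce the block distance $d(B_1,B_2)$ to a distance between two cut vertices. Since $B_1$ and $B_2$ are distinct, the block--cut-vertex tree of $G$ contains a unique path between them, and its first cut vertex after $B_1$ is a well-defined cut vertex $w_1\in V(B_1)$ through which every path from a vertex of $B_1$ to a vertex of $B_2$ must pass; symmetrically there is $w_2\in V(B_2)$. Then for all $u\in V(B_1)$ and $v\in V(B_2)$ one has $d(u,v)=d(u,w_1)+d(w_1,w_2)+d(w_2,v)$, so the minimum defining the block distance is attained at $u=w_1$, $v=w_2$, giving $d(B_1,B_2)=d(w_1,w_2)$. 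Next, suppose for contradiction that $B_1$ is not a pendant block. As $G$ has at least two blocks, $B_1$ then contains a cut vertex $w_1'\neq w_1$, and since $w_1'$ is a cut vertex lying in $B_1$ there is a block $B'\neq B_1$ containing $w_1'$ and situated on the side of $w_1'$ away from $B_2$. Every path from $B'$ to $B_2$ must cross $w_1'$, then traverse $B_1$ to $w_1$, then reach $w_2$; hence by the same additivity $d(B',B_2)=d(w_1',w_2)=d_{B_1}(w_1',w_1)+d(w_1,w_2)$. Because $w_1'$ and $w_1$ are distinct vertices of $B_1$, we have $d_{B_1}(w_1',w_1)\geq 1$, so $d(B',B_2)>d(w_1,w_2)=d(B_1,B_2)$ (in particular $B'\neq B_2$), contradicting the maximality of $d(B_1,B_2)$. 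Therefore $B_1$ is pendant, and by the symmetric argument applied at $B_2$ so is $B_2$.

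The only point demanding genuine care — and the main, though modest, obstacle — is the clean identification of the separating cut vertices $w_1,w_2$ and the justification of the additive decomposition $d(u,v)=d(u,w_1)+d(w_1,w_2)+d(w_2,v)$; once this is established, the rest is routine bookkeeping on the block--cut-vertex tree, and the strict inequality $d_{B_1}(w_1',w_1)\geq 1$ does all the work. An alternative, essentially equivalent, formulation would phrase the whole argument on the block--cut-vertex tree directly, where pendant blocks are precisely the leaves and "pushing outward" to a neighbouring leaf strictly increases the tree distance; I would prefer the distance-based version above because it interfaces directly with the additivity already used elsewhere in the paper.
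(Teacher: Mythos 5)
Your proof is correct, but there is no internal proof to compare it against: the paper does not prove Lemma \ref{block_distance} at all, it imports it from \cite{Pandey1} (Lemma 2.5 there). Judged on its own merits, your argument is sound and is the natural one: reduce $d(B_1,B_2)$ to $d(w_1,w_2)$ using additivity of distances across the separating cut vertices, then, assuming $B_1$ is not pendant, push outward through a second cut vertex $w_1'$ of $B_1$ to a block $B'\neq B_1$ containing $w_1'$, obtaining $d(B',B_2)=d_{B_1}(w_1',w_1)+d(w_1,w_2)>d(B_1,B_2)$, contradicting maximality. This is exactly the technique the paper itself uses one step later in the proof of Proposition \ref{s-pendant blocks}, where the (already pendant) block $B_1$ is assumed not s-pendant and one pushes outward through a non-pendant block at $w_1$ to a block $B'$ beyond it, getting $d(B_2,B')=d(w_2,w_1)+d(w_1,w')>d(B_1,B_2)$; so your argument fills in the cited step in the same spirit as the paper's surrounding material. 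Two small tidying points, neither a genuine gap: (i) you assert without justification that $B_1$ and $B_2$ are distinct; the hypothesis permits a maximizing pair with $B_1=B_2$, which forces the maximum to equal $0$, and in that degenerate case all blocks pairwise intersect, hence (since two distinct blocks share at most one vertex) all blocks contain a single common cut vertex and are therefore all pendant, so the conclusion still holds; (ii) the requirement that $B'$ be ``situated on the side of $w_1'$ away from $B_2$'' needs no choosing, since $w_1'$ does not lie on the block--cut tree path from $B_1$ to $B_2$, so every block other than $B_1$ containing $w_1'$ lies on that side automatically.
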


\begin{proposition}\label{s-pendant blocks}
Let $G\in \mathfrak{C}_{n,k}$, $k\geq 2$. Then $G$ has at least two s-pendant blocks.
\end{proposition}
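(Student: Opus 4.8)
The plan is to pass to the block-cut tree of $G$ and locate the s-pendant blocks among the leaves of a pruned subtree. Recall that the block-cut tree $T$ has one node for each block and one node for each cut vertex, with a block node $B$ joined to a cut-vertex node $c$ exactly when $c\in V(B)$; since $G$ is connected, $T$ is a tree. In $T$ the degree of a block node equals the number of cut vertices it contains, while every cut-vertex node has degree at least $2$ (a cut vertex lies in at least two blocks). Consequently the leaves of $T$ are precisely the block nodes of degree $1$, i.e. the pendant blocks, every cut-vertex node is an internal node, and a non-pendant block is an internal block node of degree $\geq 2$.

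First I would prune $T$ by deleting all of its leaves, that is, all pendant-block nodes, to obtain a subtree $T^{*}$. Since one removes only leaves, $T^{*}$ is again connected, and it retains every cut-vertex node and every non-pendant block node. Because $k\geq 2$, the tree $T^{*}$ contains at least the two cut-vertex nodes; as no two cut-vertex nodes are adjacent in $T$, the path joining them must pass through a block node (which, having degree $\geq 2$ in $T$, is non-pendant and hence survives), so $T^{*}$ has at least three nodes and is a nontrivial tree.

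Next I would identify the leaves of $T^{*}$. A non-pendant block node keeps all of its $T$-neighbours in $T^{*}$, since those neighbours are cut-vertex nodes and none were deleted; hence it still has degree $\geq 2$ and cannot be a leaf of $T^{*}$. Therefore every leaf of $T^{*}$ is a cut-vertex node. If $w$ is such a leaf, then $w$ lies in exactly one non-pendant block (its single surviving neighbour), and because $w$ had degree $\geq 2$ in $T$ it lost at least one neighbour during the pruning, i.e. $w$ lies in at least one pendant block $B$. By definition $B$ shares its cut vertex $w$ with exactly one non-pendant block, so $B$ is an s-pendant block.

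Finally, a nontrivial tree has at least two leaves, so $T^{*}$ has two leaves $w_{1}\neq w_{2}$, both cut-vertex nodes; each yields an s-pendant block, and these are distinct because a pendant block has a unique cut vertex. This gives the two required s-pendant blocks. The step demanding the most care is the claim that $T^{*}$ still has at least two leaves and that each of them is a cut vertex rather than a block; this is exactly where the facts that non-pendant blocks contain $\geq 2$ cut vertices and that all cut vertices survive the pruning are used, so it is worth checking the degenerate configurations (a single central non-pendant block carrying all the pendant blocks, and a path of blocks) to confirm the count. An alternative, more hands-on route would take two blocks $B_{1},B_{2}$ realizing the maximum block distance, which are pendant by Lemma \ref{block_distance}, and argue that the cut vertex of each meets only one non-pendant block; but extracting the ``exactly one'' condition from distance-maximality is fiddlier than the pruning argument, so I would prefer the latter.
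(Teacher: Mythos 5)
Your proof is correct, but it takes a genuinely different route from the paper. The paper's proof is the ``alternative'' you sketched at the end and then set aside: it takes two blocks $B_1,B_2$ realizing the maximum block distance, invokes Lemma \ref{block_distance} to conclude both are pendant, and then argues by contradiction that each must be s-pendant --- if, say, $B_1$ were not, a non-pendant block $B$ at its cut vertex $w_1$ (chosen off the path $P_{w_1w_2}$) would be adjacent to a further block $B'$, giving $d(B_2,B')=d(w_2,w_1)+d(w_1,w')>d(B_1,B_2)$, contradicting maximality; notably, the paper extracts the ``exactly one non-pendant block'' conclusion from distance-maximality with no real extra difficulty, contrary to your worry that this step is fiddly. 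Your block-cut-tree pruning argument is sound: with $k\geq 2$ every block contains a cut vertex, so the leaves of $T$ are exactly the pendant blocks; deleting them leaves a tree $T^*$ on at least three nodes whose block nodes all retain degree $\geq 2$, so the (at least two) leaves of $T^*$ are cut-vertex nodes, each carrying at least one pendant block that meets exactly one non-pendant block, hence is s-pendant, and distinct leaves give distinct s-pendant blocks since a pendant block has a unique cut vertex. What each approach buys: yours is self-contained (no reliance on the cited Lemma \ref{block_distance} from \cite{Pandey1}) and yields more structural information --- every leaf of $T^*$ is an attachment point of s-pendant blocks, so you even get a count of such attachment points --- whereas the paper's argument reuses machinery already in place and shows in addition that the two s-pendant blocks can be taken at maximum block distance, a property that, as it happens, is not exploited later in the paper.
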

\begin{proof}
Let $B_1, B_2$ be two blocks in $G$ such that $d(B_1,B_2)=\max\{d(B,B'): B, B' \mbox{ are blocks of}\; $G$\}$.  By Lemma \ref {block_distance}, $B_1$ and $B_2$ are pendant blocks. We claim that both $B_1$ and $B_2$ are s-pendant blocks. Suppose $B_1$ is not s-pendant. Let $d(B_1, B_2)=d(w_1, w_2)$ where $w_1$ and $w_2$ are cut vertices lying in $B_1$ and $B_2$, respectively. Let $P_{w_1w_2}$ be a corresponding path joining $w_1$ and $w_2$. As $B_1$ is not s-pendant there exist a non-pendant block $B$ sharing the cut vertex $w_1$ with $B_1$ such that no vertex of $B$ other than $w_1$ lies on $P_{w_1w_2}$. Also as $B$ is non-pendant, $B$ is adjacent to a block $B'$ disjoint from $P_{w_1w_2}$. Let $B$ and $B'$ are adjacent via the cut vertex $w'$. Then $d(B_2, B')=d(w_2, w_1)+d(w_1,w')> d(B_1, B_2)$, a contradiction. Hence $B_1$ is s-pendant. Similarly $B_2$ is s-pendant.
\end{proof}

\section{Vertex peripherians attaining maximum distance in $\mathfrak{C}_{n,k}$}\label{Distance}
This section focuses on finding a graph that exhibit the vertex peripherians having maximum distance within the class $\mathfrak{C}_{n,k}$. For graphs devoid of cut vertices, the following is known.
 \begin{lemma}[\cite{Jp}, Lemma 2]\label {distance_v}
Let $G\in \mathfrak{C}_{n,0}$, $n\geq 3$ and $u\in V(G)$. Then for any $v\in V(C_n)$, $D_{C_n}(v) \geq D_{G}(u)$. Moreover
\begin{equation*}\label{eq-c2}
D_{C_n}(v)=\begin{cases}\frac{n^2}{4} &\textit{if n is even,} \\ 
\frac{n^2-1}{4} & \textit{if n is odd.}\end{cases}  
\end{equation*}
\end{lemma}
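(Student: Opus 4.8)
The plan is to bound $D_G(u)$ through the distance layers from $u$ and then compare with a direct computation of $D_{C_n}(v)$. Fix $G\in\mathfrak{C}_{n,0}$ (so $G$ is $2$-connected, since $n\geq 3$) and $u\in V(G)$. Let $e$ be the eccentricity of $u$ and set $L_i=\{x\in V(G): d(u,x)=i\}$ for $0\leq i\leq e$, with $n_i:=|L_i|$. Then $n_0=1$, $\sum_{i=0}^{e}n_i=n$, and $D_G(u)=\sum_{i=1}^{e}i\,n_i$. The whole argument rests on controlling the layer sizes $n_i$.

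The key structural step, which I expect to be the main obstacle, is the claim that $n_i\geq 2$ for every $1\leq i\leq e-1$. I would prove this by contradiction: suppose $L_i=\{w\}$ for some such $i$, and pick any $x\in L_e$ (nonempty since $e$ is the eccentricity). Along any $u$–$x$ path, adjacent vertices differ in their distance from $u$ by at most $1$, so the distance-from-$u$ function takes every integer value between $0$ and $e$ on the path; in particular it equals $i$ at some vertex, which must be $w$. Hence every $u$–$x$ path passes through $w$, so $w$ is a cut vertex, contradicting $2$-connectedness. This discrete intermediate-value observation is the crux of the proof.

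With the layer bounds in hand the estimate becomes elementary. Writing $n_i=2+a_i$ for $1\leq i\leq e-1$ and $n_e=1+a_e$ with all $a_i\geq 0$, the vertex count gives $\sum_{i=1}^{e}a_i=n-2e$, and a short computation yields $D_G(u)=e^2+\sum_{i=1}^{e}i\,a_i\leq e^2+e\sum_{i=1}^{e}a_i=e(n-e)$. Since $4e(n-e)=n^2-(n-2e)^2$ and $n-2e$ has the same parity as $n$, I get $(n-2e)^2\geq 0$ when $n$ is even and $(n-2e)^2\geq 1$ when $n$ is odd, hence $e(n-e)\leq \lfloor n^2/4\rfloor$. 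This is $n^2/4$ for even $n$ and $(n^2-1)/4$ for odd $n$, giving the stated upper bound for $D_G(u)$.

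Finally I would verify that the cycle attains this bound, which also establishes the closed form. In $C_n$ the distances from a fixed vertex $v$ to the remaining vertices run through $1,2,\dots,\lfloor n/2\rfloor$ in each of the two directions around the cycle, with the antipodal vertex counted once when $n$ is even. Summing the resulting arithmetic progressions gives $D_{C_n}(v)=n^2/4$ for even $n$ and $D_{C_n}(v)=(n^2-1)/4$ for odd $n$, matching the upper bound and completing the proof of both assertions.
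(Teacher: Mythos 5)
Your proof is correct and complete. Note, however, that the paper does not prove this statement at all: it is imported verbatim from Plesn\'ik (\cite{Jp}, Lemma 2) as a known result, so there is no internal proof to compare against. Your argument is the classical one for this bound: the BFS-layer decomposition from $u$, the observation that in a $2$-connected graph every intermediate layer $L_i$, $1\leq i\leq e-1$, has at least two vertices (your discrete intermediate-value argument correctly shows a singleton layer would be a cut vertex separating $u$ from a most distant vertex), the resulting estimate $D_G(u)\leq e^2+\sum_i i\,a_i\leq e(n-e)\leq \lfloor n^2/4\rfloor$ via the parity of $n-2e$, and the direct computation showing $C_n$ attains the bound. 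Each step checks out, including the vacuous case $e=1$ and the inequality $\sum_i i\,a_i\leq e\sum_i a_i$. What your write-up buys is a self-contained justification of a lemma the paper treats as a black box; it is essentially the same mechanism used in the cited source and in the Entringer--Jackson--Snyder tradition, so it is standard rather than novel, but there is no gap.
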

We generalise Lemma \ref{distance_v} for graphs with $k$ cut vertices i.e. find a $G_0\in \mathfrak{C}_{n,k}$ and $v_0\in V(G_0)$ such that $D_{G_0}(v_0)=\max\{D_G(v): G\in \mathfrak{C}_{n,k}, v\in V(G)\}$.
\begin{lemma}[\cite{Pandey}, Lemma 3.6]\label{Dmax_Lnk}
Let $u$ be the pendant vertex and $v$ be a non-pendant vertex of $L_{n,n-k}$, $k\geq 1$. Then $D_{L_{n,n-k}}(u)>D_{L_{n,n-k}}(v)$. 
\end{lemma}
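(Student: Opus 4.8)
The plan is to set up coordinates for $L := L_{n,n-k}$ and handle the non-pendant vertices in two families. Write the cycle as $C_{n-k}$ with vertices $w=c_0,c_1,\dots,c_{n-k-1}$ and the attached path as $w=p_0,p_1,\dots,p_k=u$, so that the non-pendant vertices are exactly the path vertices $p_0,\dots,p_{k-1}$ together with the cycle vertices $c_0,\dots,c_{n-k-1}$ (note $p_0=c_0=w$). Since $n-k\ge 3$ throughout, the cycle is genuine and $u$ is the unique pendant vertex. I would prove $D_L(u)>D_L(v)$ separately for $v$ a path vertex and for $v$ a cycle vertex.

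For a path vertex $p_i$ with $0\le i\le k-1$, I would obtain $D_L(u)-D_L(p_i)$ by telescoping over consecutive single-edge comparisons. Deleting the edge $p_jp_{j+1}$ splits $L$ into the side containing $p_j$ (the cycle together with $p_1,\dots,p_j$, of size $n-k+j$) and the side containing $p_{j+1}$ (the vertices $p_{j+1},\dots,p_k$, of size $k-j$); every vertex on the first side is one step farther from $p_{j+1}$ than from $p_j$, and every vertex on the second side one step nearer, so $D_L(p_{j+1})-D_L(p_j)=(n-k+j)-(k-j)=n-2k+2j$. Summing from $j=i$ to $k-1$ gives $D_L(u)-D_L(p_i)=(k-i)(n-k-1+i)$, which is positive because $n-k-1+i\ge n-k-1\ge 2$. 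A point worth flagging is that the individual increments $n-2k+2j$ need not be positive (the distance is not monotone along the hanging path when $k$ is large), so the argument must use the closed form of the whole telescoping sum rather than a naive step-by-step comparison.

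For a cycle vertex $c_j$, I would instead apply Lemma~\ref{count}$(i)$ at the cut vertex $w$, writing $L=G_1\cup G_2$ with $G_1\cong C_{n-k}$ and $G_2\cong P_{k+1}$ (the hanging path, with $w$ as an endpoint). This yields $D_L(c_j)=D_{C_{n-k}}(c_j)+k\,d(c_j,w)+D_{P_{k+1}}(w)$ and $D_L(u)=D_{P_{k+1}}(u)+(n-k-1)\,d(u,w)+D_{C_{n-k}}(w)$. The terms $D_{C_{n-k}}(c_j)$ and $D_{C_{n-k}}(w)$ are all equal by the vertex-transitivity of the cycle, and $D_{P_{k+1}}(u)=D_{P_{k+1}}(w)$ since $u$ and $w$ are the two endpoints of $P_{k+1}$ (the endpoint symmetry in equation~(\ref{Path_vertex})). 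Hence $D_L(c_j)$ is maximised over $j$ at the cycle vertex $c^\ast$ farthest from $w$, for which $d(c^\ast,w)=\lfloor (n-k)/2\rfloor$, and subtracting leaves $D_L(u)-D_L(c^\ast)=k(n-k-1)-k\lfloor (n-k)/2\rfloor=k\bigl(\lceil (n-k)/2\rceil-1\bigr)$, which is positive because $n-k\ge 3$ forces $\lceil (n-k)/2\rceil\ge 2$. Since $c^\ast$ already dominates every other cycle vertex, this settles the cycle case.

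Combining the two cases gives $D_L(u)>D_L(v)$ for every non-pendant $v$, so $u$ is in fact the unique peripherian vertex. The only genuine obstacle is the non-monotonicity noted above, which defeats the tempting ``distance grows as one moves toward the leaf'' heuristic; by contrast the two clean cancellations used in the cycle case (the cycle distances equal by symmetry, the path-endpoint distances equal by equation~(\ref{Path_vertex})) make that half essentially mechanical once the maximiser $c^\ast$ has been identified.
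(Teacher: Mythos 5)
Your proof is correct, but note that this paper never proves the statement at all: it is imported verbatim from \cite{Pandey} (Lemma 3.6 there), so there is no internal proof to compare against, and your argument in effect supplies the proof this paper omits. Both halves check out. For a path vertex $p_i$, the bridge-deletion telescoping gives $D_L(p_{j+1})-D_L(p_j)=(n-k+j)-(k-j)$, and the closed form $D_L(u)-D_L(p_i)=(k-i)(n-k-1+i)>0$ is right; your flag about non-monotonicity is a genuine and worthwhile observation, since for $k$ large relative to the girth $n-k$ the early increments $n-2k+2j$ are indeed negative, so only the summed form works. For a cycle vertex, your use of the decomposition at the cut vertex $w$ is exactly the mechanism of Lemma~\ref{count}$(i)$, and the two cancellations (vertex-transitivity of $C_{n-k}$, endpoint symmetry of $P_{k+1}$ from equation~(\ref{Path_vertex})) reduce the comparison to $k\bigl(\lceil (n-k)/2\rceil-1\bigr)>0$, valid since $n-k\geq 3$. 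One could alternatively handle the path vertices in the same style as your cycle case, splitting $L$ at $p_i$ into $L_{n-k+i,\,n-k}$ and $P_{k-i+1}$ and invoking Lemma~\ref{count}$(i)$ with the explicit formulas (\ref{Path_vertex}) and (\ref{D_Lnk-p}); that is presumably closer to the computation in \cite{Pandey}, whereas your telescoping argument is more elementary and avoids the distance formulas for $L_{n,g}$ entirely.
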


\begin{lemma}\label {pendant block is cycle}
Let $G\in \mathfrak{C}_{n,k}$, $n\geq 5$ and $k\geq 1$.  Let $B$ be a pendant block in $G$ with $|V(B)|=m\geq 4$ containing the cut vertex $w$. Let $G'$ be the graph obtained from $G$ by replacing $B$ by $C_m$ such that $w$ remains a cut vertex of $G'$ in $C_m$. Then for any $v\in V(G)$, there exists a $v'\in V(G')$ such that $D_{G'}(v')\geq D_G(v)$. In particular, if $v\in V(G)\setminus V(B)\cup\{w\}$, then $D_{G'}(v)\geq D_G(v)$.
\end{lemma}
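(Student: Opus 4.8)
The plan is to decompose both $G$ and $G'$ across the cut vertex $w$ and apply Lemma \ref{count}$(i)$ to isolate the effect of replacing $B$ by $C_m$. Write $G\cong B\cup H$ and $G'\cong C_m\cup H$, where $H$ is the subgraph induced by the vertices lying outside $B$ together with $w$, so that $V(B)\cap V(H)=V(C_m)\cap V(H)=\{w\}$, $|V(B)|=|V(C_m)|=m$, and $|V(H)|=n-m+1\geq 2$ (hence $n-m\geq 1$). Note that $H$ is literally the same subgraph in $G$ and in $G'$, which is what makes the comparison clean.

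Two facts about $B$ drive the argument, both obtained by viewing $B$ as a $2$-connected graph, i.e. $B\in\mathfrak{C}_{m,0}$. First, since every vertex of $C_m$ has distance exactly $\floor{m^2/4}$, Lemma \ref{distance_v} gives $D_{C_m}(x)\geq D_B(y)$ for every $x\in V(C_m)$ and every $y\in V(B)$; in particular $D_{C_m}(w)\geq D_B(w)$. Second, I will use that $\operatorname{diam}(B)\leq\floor{m/2}$: any two vertices of a $2$-connected graph lie on a common cycle of length at most $m$, and the two arcs of that cycle have lengths summing to its length, so the shorter arc --- an upper bound for the distance between the two vertices --- has at most $\floor{m/2}$ edges. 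This quantity $\floor{m/2}$ is precisely the largest distance from $w$ to a vertex of $C_m$.

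With these in hand I split on the location of $v$. If $v\in V(H)$ --- which is exactly the ``in particular'' case $v\in (V(G)\setminus V(B))\cup\{w\}$ --- I apply Lemma \ref{count}$(i)$ with $G_1=H$ and $G_2=B$ (resp.\ $G_2=C_m$). Since the shortest $v$--$w$ path stays inside $H$, the terms $D_H(v)$ and $(m-1)\,d_H(v,w)$ are identical in $G$ and $G'$, so
$$D_{G'}(v)-D_G(v)=D_{C_m}(w)-D_B(w)\geq 0,$$
and $v'=v$ works. If instead $v\in V(B)\setminus\{w\}$, I apply Lemma \ref{count}$(i)$ with $G_1=B$ (resp.\ $G_1=C_m$) and $G_2=H$, choosing $v'$ to be a vertex of $C_m$ at distance $\floor{m/2}$ from $w$. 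The desired inequality $D_{G'}(v')\geq D_G(v)$ then reduces, after cancelling the common term $D_H(w)$, to
$$D_{C_m}(v')+(n-m)\,d_{C_m}(v',w)\;\geq\;D_B(v)+(n-m)\,d_B(v,w),$$
which I verify term by term: $D_{C_m}(v')\geq D_B(v)$ by the first fact, and $d_{C_m}(v',w)=\floor{m/2}\geq \operatorname{diam}(B)\geq d_B(v,w)$ by the second, the common coefficient $n-m\geq 1$ being positive.

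The routine parts are the two applications of Lemma \ref{count}$(i)$ and the cancellation of shared terms. The only genuine content beyond bookkeeping is the pair of structural estimates on $B$, and the point to get right is the choice of the replacement vertex $v'$ in the second case: taking $v'$ antipodal to $w$ simultaneously maximizes $d_{C_m}(v',w)$ while keeping $D_{C_m}(v')$ at its common value $\floor{m^2/4}$, so both terms dominate at once. I expect the diameter bound $\operatorname{diam}(B)\leq\floor{m/2}$ to be the step that needs the most care to state precisely, although its proof is short.
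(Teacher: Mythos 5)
Your proposal is correct and follows essentially the same route as the paper: decompose across the cut vertex $w$ via Lemma \ref{count}$(i)$, use Lemma \ref{distance_v} to compare $D_B$ with $D_{C_m}$, and in the case $v\in V(B)$ take $v'$ antipodal to $w$ in $C_m$ so that $d_{C_m}(v',w)=\lfloor m/2\rfloor$ dominates $d_B(v,w)$. The only difference is that you explicitly justify the bound $\operatorname{diam}(B)\leq\lfloor m/2\rfloor$ for the $2$-connected block $B$, a step the paper's proof uses implicitly without comment.
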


\begin{proof}
Let $H$ be the subgraph of $G$ induced by $V(G)\setminus V(B)\cup\{w\}$. Then $G\cong H\cup B$ with $V(H)\cap V(B)=w$ and $G'\cong H\cup C_m$ with $V(H)\cap V(C_m)=w$.\\
If $v\in V(H)$, then by Lemma \ref{count} ($i$) we have
 \begin{align*}
 D_G(v)&=D_H(v)+(m-1)d(v,w)+D_B(w)\\
             &\leq D_H(v)+(m-1)d(v,w)+ D_{C_m}(w) &\mbox{ [using Lemma \ref{distance_v}]}\\
             &=D_{G'}(v).
 \end{align*}
If $v\in V(B)$, take $v'$ to be the vertex in the cycle $C_m$ of $G'$ such that $d(v',w)=\lfloor\frac{m}{2}\rfloor$. Then
 \begin{align*}
 D_G(v)&=D_B(v)+(|V(H)|-1)d(v,w)+D_H(w)\\
             &\leq D_{C_m}(v')+(|V(H)|-1)\floor*{\frac{m}{2}}+ D_H(w)\\
             &=D_{C_m}(v')+(|V(H)|-1)d(v',w)+ D_H(w)\\
             &=D_{G'}(v').
 \end{align*}
This completes the proof.
 \end{proof}
 
 \begin{proposition}\label{pendant_blocks}
For $n\geq 4$ and $k\geq 1$ there exists a triangle free graph $G_0\in \mathfrak{C}_{n,k}$ with all its pendant blocks are either $K_2$ or a cycle on at least $4$ vertices such that $D_{G_0}(v_0)=\max\{D_G(v): G\in \mathfrak{C}_{n,k}, k\geq 1, v\in V(G)\}$.
 \end{proposition}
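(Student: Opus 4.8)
The plan is to begin with a maximizer supplied by Remark~\ref{Remark_Distance} and then repeatedly invoke Lemma~\ref{pendant block is cycle} to convert every pendant block on at least $4$ vertices into a cycle, verifying at each stage that the resulting graph stays in $\mathfrak{C}_{n,k}$, remains triangle-free, and still attains the maximal distance. All the distance estimates are then delivered wholesale by Lemma~\ref{pendant block is cycle}, so the work reduces to block-structure bookkeeping.

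First I would dispose of the boundary case $n=4$, since Lemma~\ref{pendant block is cycle} requires $n\ge 5$. For $n=4$ the standing range $0\le k\le n-3$ forces $k=1$, and any triangle-free graph in $\mathfrak{C}_{4,1}$ must have all of its blocks equal to $K_2$: a triangle-free block has either $2$ or at least $4$ vertices, and a $2$-connected triangle-free graph on $4$ vertices is $C_4$, which exhausts all vertices and has no cut vertex, contradicting $k=1$. Hence such a graph is a tree, every pendant block of which is $K_2$, and the conclusion is immediate from Remark~\ref{Remark_Distance}. So from now on I assume $n\ge 5$.

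Write $M=\max\{D_G(v): G\in\mathfrak{C}_{n,k}, v\in V(G)\}$. By Remark~\ref{Remark_Distance} there is a triangle-free $G\in\mathfrak{C}_{n,k}$ with $D_G(v_0)=M$ for some $v_0\in V(G)$. As $G$ is triangle-free no block has exactly $3$ vertices, so each block, and in particular each pendant block, is either $K_2$ or $2$-connected on at least $4$ vertices. Suppose some pendant block $B$, with cut vertex $w$ and $|V(B)|=m\ge 4$, is not a cycle. I would apply Lemma~\ref{pendant block is cycle} to replace $B$ by $C_m$ (keeping $w$ a cut vertex), producing a graph $G'$; the lemma furnishes $v'\in V(G')$ with $D_{G'}(v')\ge D_G(v_0)=M$. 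Once $G'\in\mathfrak{C}_{n,k}$ is confirmed, the maximality of $M$ forces $D_{G'}(v')=M$, so $G'$ is again a maximizer.

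The only point needing genuine checking is that this surgery preserves membership in the class and triangle-freeness, and that the process terminates. Since $C_m$ with $m\ge 4$ is triangle-free and only $B$ is altered, $G'$ is triangle-free. For the cut vertices, the subgraph $H$ induced by $V(G)\setminus(V(B)\setminus\{w\})$ is untouched, so the cut vertices of $G'$ are exactly those of $H$ together with $w$: no internal vertex of $C_m$ is a cut vertex, since deleting one leaves a path still joined to $H$ at $w$. Hence $G'\in\mathfrak{C}_{n,k}$, and moreover $C_m$ is a pendant block of $G'$ meeting $H$ only at $w$. Each replacement strictly decreases the number of pendant blocks on at least $4$ vertices that are not cycles, because $B$ becomes a cycle while all remaining blocks are left intact and no new non-cycle pendant block is created. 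Iterating finitely many times yields the required $G_0\in\mathfrak{C}_{n,k}$, triangle-free with every pendant block equal to $K_2$ or a cycle on at least $4$ vertices and attaining distance $M$. I expect this bookkeeping, namely the invariance of $k$ and the termination of the iteration, to be the only mildly delicate part, as the distance inequalities themselves are exactly the content of Lemma~\ref{pendant block is cycle}.
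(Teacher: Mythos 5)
Your proposal is correct and follows essentially the same route as the paper: take a triangle-free maximizer supplied by Remark~\ref{Remark_Distance}, replace each non-cycle pendant block by a cycle via Lemma~\ref{pendant block is cycle}, and use maximality of the distance to conclude equality. Your explicit handling of the $n=4$ case (where Lemma~\ref{pendant block is cycle} does not apply) and of the iteration's termination and preservation of $k$ is more careful than the paper's one-shot replacement, but it is the same argument.
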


\begin{proof}
Let  $D_{G'}(v')=\max\{D_G(v): G\in \mathfrak{C}_{n,k}, k\geq 1, v\in V(G)\}$. By Remark \ref{Remark_Distance}, we may assume that $G'$ is triangle free. Replace each pendant blocks $B$ of $G'$ which is not cyclic by a cycle of size $|B|$. Let the resulting graph be $G_0$. By Lemma \ref {pendant block is cycle}, there exists a $v_0\in V(G_0)$ such that $D_{G_0}(v_0)\geq D_{G'}(v')$. Since $G_0\in \mathfrak{C}_{n,k}$ and $D_{G'}(v')=\max\{D_G(v): G\in \mathfrak{C}_{n,k}, k\geq 1, v\in V(G)\}$, $D_{G_0}(v_0)= D_{G'}(v')$.
\end{proof}

\begin{lemma}\label{p-block to cycle}
Let $H$ be a graph containing a vertex $w$ and $m_1,m_2\geq 3$ be two integers such that $m_1+m_2-1=m$. Let $G_1$ be the graph obtained by identifying the cut vertex of $C_{m_1,m_2}^m$ with $w$, $G_2$ be the graph obtained by identifying the cut vertex of $L_{m,m-1}$ with $w$ and $G$ be the graph obtained by identifying a vertex of $C_m$ with $w$. Then for any $v\in V(H)$, $D_{G}(v)>D_{G_1}(v)$ and $D_{G}(v)> D_{G_2}(v)$.
\end{lemma}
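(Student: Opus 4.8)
The plan is to reduce the statement to two purely numerical comparisons of vertex distances in the three blocks being attached. The key observation is that each of $C_m$, $C_{m_1,m_2}^m$ and $L_{m,m-1}$ has exactly $m$ vertices, and in each case $w$ is the vertex identified with $H$ (a cut vertex in the latter two). Writing $A$ for the attached graph and applying Lemma \ref{count}$(i)$ to the decomposition $G\cong H\cup A$ with $V(H)\cap V(A)=\{w\}$, I would obtain for every $v\in V(H)$
\[
D_G(v)=D_H(v)+(m-1)\,d(v,w)+D_A(w),
\]
together with the analogous identities for $G_1$ and $G_2$. Since the summands $D_H(v)$ and $(m-1)\,d(v,w)$ depend only on $H$, $v$, $w$ and on $|V(A)|=m$, they are identical across the three graphs, so
\[
D_G(v)-D_{G_1}(v)=D_{C_m}(w)-D_{C_{m_1,m_2}^m}(w),\qquad D_G(v)-D_{G_2}(v)=D_{C_m}(w)-D_{L_{m,m-1}}(w).
\]
In particular each difference is independent of $v$, so it suffices to prove the two inequalities $D_{C_m}(w)>D_{C_{m_1,m_2}^m}(w)$ and $D_{C_m}(w)>D_{L_{m,m-1}}(w)$ at the distinguished vertices.

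Next I would compute the three vertex distances. By Lemma \ref{distance_v}, $D_{C_m}(w)=\lfloor m^2/4\rfloor$. For the cut vertex of $C_{m_1,m_2}^m$, decomposing that graph at its cut vertex into $C_{m_1}$ and $C_{m_2}$ (again via Lemma \ref{count}$(i)$, the cross term vanishing because $d(w,w)=0$) gives $D_{C_{m_1,m_2}^m}(w)=D_{C_{m_1}}(w)+D_{C_{m_2}}(w)=\lfloor m_1^2/4\rfloor+\lfloor m_2^2/4\rfloor$. Likewise, decomposing $L_{m,m-1}$ at its cut vertex into $C_{m-1}$ and the pendant edge yields $D_{L_{m,m-1}}(w)=D_{C_{m-1}}(w)+1=\lfloor(m-1)^2/4\rfloor+1$.

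Finally I would verify the two inequalities, using $m=m_1+m_2-1$ and $\lfloor m^2/4\rfloor\geq m^2/4-1/4$ together with $\lfloor m_i^2/4\rfloor\leq m_i^2/4$. For the first, the continuous gap is $m^2/4-m_1^2/4-m_2^2/4=\tfrac14\big(2(m_1-1)(m_2-1)-1\big)\geq 7/4$ whenever $m_1,m_2\geq 3$, whence $\lfloor m^2/4\rfloor-\lfloor m_1^2/4\rfloor-\lfloor m_2^2/4\rfloor\geq 7/4-1/4>0$. For the second, $\lfloor m^2/4\rfloor-\lfloor(m-1)^2/4\rfloor\geq\tfrac14\big(m^2-(m-1)^2-1\big)=(m-1)/2\geq 2$ since $m=m_1+m_2-1\geq 5$, which exceeds $1$ and so $D_{C_m}(w)>D_{L_{m,m-1}}(w)$. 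The conceptual heart is the reduction in the first paragraph — attaching blocks of equal order makes the distance difference independent of $v$, so the problem collapses to the cut-vertex distances — and this is immediate from Lemma \ref{count}. The only genuinely delicate point is the floor bookkeeping in this last step, and the margins above ($3/2$ and $2$) are comfortably positive, so no edge case among $m_1,m_2\geq 3$ is lost.
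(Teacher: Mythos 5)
Your proof is correct and takes essentially the same route as the paper's: both apply Lemma \ref{count}$(i)$ to the decomposition of $G$, $G_1$, $G_2$ at $w$, note that the terms $D_H(v)+(m-1)d(v,w)$ are common to all three, and reduce the claim to comparing $D_{C_m}(w)$ with $D_{C_{m_1,m_2}^m}(w)$ and $D_{L_{m,m-1}}(w)$. The only cosmetic difference is that you evaluate the latter two quantities exactly as $\lfloor m_1^2/4\rfloor+\lfloor m_2^2/4\rfloor$ and $\lfloor (m-1)^2/4\rfloor+1$ and do floor bookkeeping, while the paper simply upper-bounds them by $\tfrac{m_1^2+m_2^2}{4}$ and $\tfrac{(m-1)^2}{4}+1$ and lower-bounds $D_{C_m}(w)$ by $\tfrac{m^2-1}{4}$.
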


\begin{proof}
\begin{align*}
D_{G_1}(v)&=D_H(v)+(m-1)d(v,w)+D_{C_{m_1, m_2}^m}(w)\\
           &\leq D_H(v)+(m-1)d(v,w)+\frac{m_1^2+m_2^2}{4} \hskip 1cm\mbox {[equality holds if both $m_1, m_2$ are even]}\\
           &< D_H(v)+(m-1)d(v,w)+\frac{(m_1+m_2-1)^2-1}{4}\\
           &\leq D_H(v)+(m-1)d(v,w)+D_{C_m}(w)\\
           &= D_G(v)
\end{align*}
and 
\begin{align*}
D_{G_2}(v)&=D_H(v)+(m-1)d(v,w)+D_{L_{m, m-1}^m}(w)\\
           &\leq D_H(v)+(m-1)d(v,w)+\frac{(m-1)^2}{4}+1 &\mbox { [equality holds if $m$ is odd]}\\
           &< D_H(v)+(m-1)d(v,w)+\frac{m^2-1}{4} &\mbox{ [for $m\geq 4$]}\\
           &\leq D_G(v).
\end{align*}
\end{proof}

\begin{lemma}\label {Comparision}
Let $v_0$ be the pendant vertex of $L_{n,n-k}$ and let $n=m_1+m_2+k-2$, $m_1,m_2\geq 3$ and $k\geq 1$. Then  $D_{L_{n, n-k}}(v_0)> D_{C_{m_1, m_2}^n}(v)$ for any $v\in V(C_{m_1, m_2}^n)$.
\end{lemma}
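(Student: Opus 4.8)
The plan is to first locate the vertex of $C_{m_1,m_2}^n$ carrying the maximum distance, and then to compare that single value with $D_{L_{n,n-k}}(v_0)$ through an explicit computation. Throughout, write $a$ and $b$ for the two cut vertices of $C_{m_1,m_2}^n$ lying on $C_{m_1}$ and $C_{m_2}$ respectively, joined by the internal path $a=p_1,p_2,\dots,p_k=b$, and abbreviate $D(\cdot)=D_{C_{m_1,m_2}^n}(\cdot)$.

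First I would show that $\max_{v}D(v)$ is attained either at the antipode $\alpha$ of $a$ on $C_{m_1}$ or at the antipode $\beta$ of $b$ on $C_{m_2}$. For $v\in V(C_{m_1})$, split $C_{m_1,m_2}^n$ at $a$ as $C_{m_1}\cup R$, where $R\cong L_{m_2+k-1,m_2}$ has pendant vertex $a$; Lemma~\ref{count}$(i)$ gives $D(v)=D_{C_{m_1}}(v)+(|V(R)|-1)d(v,a)+D_R(a)$, and since the vertex distance $D_{C_{m_1}}(\cdot)$ is the same for every vertex (Lemma~\ref{distance_v}), $D(v)$ is maximized exactly where $d(v,a)$ is largest, namely at $v=\alpha$. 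The symmetric splitting at $b$ handles $v\in V(C_{m_2})$ and yields $\beta$. For $v=p_i$ on the internal path, passing from $p_i$ to $p_{i+1}$ changes $D$ by (number of vertices on the $a$-side) minus (number on the $b$-side), a quantity increasing in $i$; hence $D$ is convex along the path and its maximum there is at an endpoint $a$ or $b$, which is already dominated by $D(\alpha)$ or $D(\beta)$. This reduces the lemma to proving $D_{L_{n,n-k}}(v_0)>D(\alpha)$; the bound for $\beta$ then follows by interchanging $m_1$ and $m_2$, since $L_{n,n-k}$ depends only on $n$ and $k$.

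For the comparison, apply Lemma~\ref{count}$(i)$ once more to get $D(\alpha)=D_{C_{m_1}}(\alpha)+(m_2+k-2)\lfloor m_1/2\rfloor+D_{L_{m_2+k-1,m_2}}(a)$, where $D_{C_{m_1}}(\alpha)$ and the pendant distance $D_{L_{m_2+k-1,m_2}}(a)$ come from Lemma~\ref{distance_v} and~(\ref{D_Lnk-p}); likewise $D_{L_{n,n-k}}(v_0)$ is read from~(\ref{D_Lnk-p}) with $n-k=m_1+m_2-2$. Substituting $n=m_1+m_2+k-2$ and dropping the floor corrections, the difference $D_{L_{n,n-k}}(v_0)-D(\alpha)$ collapses to $\frac{k(m_1-2)}{2}$, which is at least $\frac12$ because $k\geq 1$ and $m_1\geq 3$. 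It then remains to check that reinstating the floors preserves this margin: the only floor carrying a large coefficient, namely $(m_2+k-2)\lfloor m_1/2\rfloor$, sits inside $D(\alpha)$ and so can only lower $D(\alpha)$, while the remaining parity adjustments on the two terms of the form $\lfloor m_i^2/4\rfloor$ combine to a nonnegative quantity (using that $m_1+m_2$ is even precisely when $m_1$ and $m_2$ share parity). Hence $D_{L_{n,n-k}}(v_0)-D(\alpha)\geq \frac{k(m_1-2)}{2}>0$.

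The step I expect to be the main obstacle is locating the peripheral vertex, specifically ruling out the internal path vertices: the cycle vertices are disposed of immediately because the vertex distance of a cycle does not depend on the chosen vertex, but the path requires the convexity argument above. The computation of the second step is routine once it is organized by the parities of $m_1$ and $m_2$, and the reassuring feature is that the one worrisome floor term carries the sign that helps the inequality rather than hurting it, so no parity case is ever tight.
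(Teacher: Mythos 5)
Your proof is correct, and it takes a route that differs from the paper's in a worthwhile way. The paper never locates the peripherian of $C_{m_1,m_2}^n$: it partitions the vertices into the two cycles and the interior of the connecting path, derives a separate explicit upper bound for each of the three classes via Lemma~\ref{count}$(i)$ (the path class needs its own estimate using~(\ref{Path_vertex}) together with a W.L.O.G.\ assumption $m_1\leq m_2$), lower-bounds $D_{L_{n,n-k}}(v_0)$ by its parity-worst case, compares the three bounds against that, and then treats $k=1$ as a separate case. You instead prove the stronger structural fact that the maximum of $D_{C_{m_1,m_2}^n}$ is attained at a cycle antipode: the cycle vertices are immediate because $D_{C_m}(\cdot)$ is constant on a cycle (Lemma~\ref{distance_v}), and the interior path vertices are eliminated by your bridge-shifting observation, which is sound --- indeed $D(p_{i+1})-D(p_i)=(m_1+i-1)-(m_2+k-1-i)$ is strictly increasing in $i$, so $D$ restricted to the path is convex and peaks at an endpoint, which the antipode strictly beats since $m_2+k-2\geq 2$. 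This replaces the paper's most laborious case with two lines, makes $k=1$ and $k=2$ come along for free (the interior path is then empty), and reduces everything to a single exact comparison; your claim that the floor-free difference equals $\tfrac{k(m_1-2)}{2}$ is correct (the algebra checks out), and your floor bookkeeping is also right: the parity corrections add $\tfrac14\bigl([m_1\ \text{odd}]+[m_2\ \text{odd}]-[m_1{+}m_2\ \text{odd}]\bigr)+\tfrac{m_2+k-2}{2}[m_1\ \text{odd}]\geq 0$ to that margin, and symmetry in $m_1,m_2$ handles the other antipode. What the paper's method buys is that it only ever needs upper bounds, never the exact maximizer; what yours buys is less computation, uniformity in $k$, and a sharper by-product, namely that the peripherians of $C_{m_1,m_2}^n$ are identified as the antipodes of the cut vertices on the two cycles.
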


\begin{proof}
First suppose $k\geq 2$. Let $w$ and $w'$ be the cut vertices of $C_{m_1,m_2}^n$ lying in $C_{m_1}$ and $C_{m_2}$, respectively. Let $P_k: w=v_1 v_2\cdots v_k=w'$ be the path in $C_{m_1, m_2}^n$ joining $w$ and $w'$. Then $V(C_{m_1}), V(C_{m_2})$ and $V(P)\setminus\{w, w'\}$ partition $V(C_{m_1,m_2}^n)$. First suppose $v$ belongs to the part $V(C_{m_1})$. Then 
\begin{align*}
D_{C_{m_1, m_2}^n}(v)&=D_{C_{m_1}}(v)+(m_2+k-2)d(v,w)+D_{L_{m_2+k-1,m_2}}(w)\\
                                      &\leq \frac{m_1^2}{4}+(m_2+k-2)\frac{m_1}{2}+ \frac{m_2^2}{4}+\frac{(k-1)(2m_2+k-2)}{2}\\
                                      &=\frac{1}{4}(m_1^2+m_2^2+2m_1m_2+2m_1k+4m_2k+2k^2-4m_1-4m_2-6k+4).
\end{align*}
Similarly if $v$ belongs to the part $V(C_{m_2})$, we get
\begin{align*}
D_{C_{m_1, m_2}^n}(v)&\leq \frac{1}{4}(m_1^2+m_2^2+2m_1m_2+4m_1k+2m_2k+2k^2-4m_1-4m_2-6k+4).
\end{align*}
Finally suppose $v\in V(P)\setminus\{w,w'\}$. Let $v=v_i, 2\leq i\leq k-1$. W.L.O.G. assume that $m_1\leq m_2$. Then we have
\begin{align*}
D_{C_{m_1, m_2}^n}(v)&=D_{L_{m_2+k-1,m_2}}(v_i)+(i-1)(m_1-1)+D_{C_{m_1}}(w)\\
				    &=D_{P_k}(v_i)+(k-i)(m_2-1)+D_{C_{m_2}}(w')+(i-1)(m_1-1)+D_{C_{m_1}}(w)\\
				    &\leq \frac{k(k-1)}{2}+ (m_2-1)(k-1)+\frac{m_2^2}{4}+\frac{m_1^2}{4}  &[\mbox{using \ref{Path_vertex}}]\\
				    &=\frac{1}{4}(m_1^2+m_2^2+4m_2k+2k^2-4m_2-6k+4).
\end{align*} 
Further, we have
\begin{align*}
D_{L_{n, n-k}}(v_0)&\geq \frac{(n-k)^2-1}{4}+ \frac{k(2n-k-1)}{2} &\mbox{[from (\ref{D_Lnk-p})]}\\
                              &=\frac{1}{4}(m_1^2+m_2^2+2m_1m_2+4m_1k+4m_2k+2k^2-4m_1-4m_2-10k+3).
\end{align*}
By comparing, we get $D_{L_{n, n-k}}(v_0)>D_{C_{m_1, m_2}^n}(v)$ for any $v\in V(C_{m_1, m_2}^n)$.

Now suppose $k=1$. Then $w=w'$ and $V(P)\setminus\{w,w'\}=\emptyset$. By similar calculations as above it follows that $D_{L_{n, n-k}}(v_0)>D_{C_{m_1, m_2}^n}(v)$ for any $v\in V(C_{m_1, m_2}^n)$. 
\end{proof}

\begin{lemma}\label{Dmax@pendant}
If $D_{G_0}(v_0)=\max\{D_G(v): v\in V(G), G\in \mathfrak{C}_{n,k}, k\geq 1\}$, then $v_0$ must lie in some pendant block of $G_0$. Furthermore, if $n\geq 5$, then the pendant block containing $v_0$ shares its cut vertex with exactly one other block.  
\end{lemma}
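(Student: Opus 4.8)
The plan is to treat the two assertions separately, in each case contradicting the maximality of $D_{G_0}(v_0)$ by producing a vertex — in $G_0$ or in a suitably modified member of $\mathfrak{C}_{n,k}$ — of strictly larger distance. Throughout I would first invoke Proposition~\ref{pendant_blocks} and Remark~\ref{Remark_Distance} to assume every pendant block of $G_0$ is $K_2$ or a cycle, so that for a pendant block $B$ with cut vertex $w$ and $|V(B)|=m$ the vertex $v^\ast\in V(B)$ with $d(v^\ast,w)=\lfloor m/2\rfloor$ satisfies $D_B(v^\ast)=D_B(w)$ (trivially for $K_2$, and by vertex-transitivity of the cycle). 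Writing $G_0\cong B\cup G_1$ with $V(B)\cap V(G_1)=\{w\}$ and $p=|V(G_1)|$, Lemma~\ref{count}$(i)$ gives, for any $v_0\in V(G_1)$,
$$D_{G_0}(v^\ast)-D_{G_0}(v_0)=(p-1)\Big\lfloor\tfrac{m}{2}\Big\rfloor+\big(D_{G_1}(w)-D_{G_1}(v_0)\big)-(m-1)d(v_0,w).$$
Bounding $D_{G_1}(w)-D_{G_1}(v_0)$ below by the triangle inequality and using $p+m-1=n$, the right-hand side is at least $(p-1)\lfloor m/2\rfloor+(2f-n)\,d(v_0,w)$, where $f$ is the number of $u\in V(G_1)$ for which $v_0$ lies on a shortest $w$–$u$ path. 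This inequality is the engine of the first assertion.

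To prove the first assertion I would first rule out that $v_0$ is a cut vertex. If it were, then $G_0-v_0$ has at least two components; choosing $B$ to be a pendant block inside a smallest component $C$ and letting $w$ be its cut vertex forces every vertex outside $C$, together with $v_0$, to have $v_0$ on its shortest path to $w$, so $f\geq n-|C|\geq (n+1)/2$ and hence $2f-n>0$. The engine inequality is then strictly positive, giving $D_{G_0}(v^\ast)>D_{G_0}(v_0)$, a contradiction. Thus $v_0$ is a non-cut vertex lying in a unique block $B_0$, and if $B_0$ is pendant we are done.

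The remaining case, where $v_0$ is a non-cut vertex of a \emph{non-pendant} block $B_0$, is the step I expect to be the main obstacle, since now $f$ may be small and the engine is too weak. Here I would instead expand $D_{G_0}$ along $B_0$: if $w_1,\dots,w_t$ are the cut vertices of $B_0$ carrying branches $H_1,\dots,H_t$ of sizes $h_1,\dots,h_t$, then $D_{G_0}(v)=D_{B_0}(v)+\sum_i (h_i-1)\,d_{B_0}(v,w_i)+c$ for $v\in V(B_0)$, with $c$ independent of $v$. Taking the heaviest branch $H$ (at $w^\ast$) and its deepest pendant vertex $u$ at depth $\delta=d(w^\ast,u)$, Lemma~\ref{count}$(i)$ yields $D_{G_0}(u)-D_{G_0}(w^\ast)=D_H(u)-D_H(w^\ast)+\delta(n-h)$, and comparing this with the expansion for $v_0$ — using that $H$ carries the largest mass — should give $D_{G_0}(u)>D_{G_0}(v_0)$. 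Controlling this comparison when $B_0$ and the branches are irregular is the delicate point.

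For the second assertion, suppose $n\geq5$ and that the cut vertex $w$ of the pendant block $B\ni v_0$ is shared with at least two other blocks, so that $w$ is a cut vertex of $G_1$ and $G_1-w$ has at least two non-$B$ components. Since $D_{G_0}(v_0)=D_B(v_0)+(p-1)d(v_0,w)+D_{G_1}(w)$ by Lemma~\ref{count}$(i)$, it suffices to rearrange $G_1$ into $G_1'$ on the same vertex set, with the same resulting number of cut vertices in $B\cup G_1'$, but with $D_{G_1'}(w)>D_{G_1}(w)$. I would detach one branch at $w$ and re-glue it so that all of its vertices move strictly farther from $w$, which visibly increases $D_{G_1}(w)$ and hence $D_{G_0}(v_0)$. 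The essential difficulty — and the reason the hypothesis $n\geq5$ is needed — is to choose the re-gluing point so that the total number of cut vertices is \emph{exactly} preserved, since a naive relocation changes it by one; one must re-attach at an already-existing cut vertex or perform a simultaneous compensating contraction. Establishing this bookkeeping is the main obstacle for the second assertion.
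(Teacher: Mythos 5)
Your preliminary step (ruling out that $v_0$ is a cut vertex via the ``engine'' inequality) is correctly derived from Lemma \ref{count}$(i)$, but the case you yourself flag as the main obstacle --- $v_0$ a non-cut vertex of a non-pendant block $B_0$ --- is not a delicate computation awaiting control: the internal comparison you propose there is false. Take the cycle $C_{20}$ with ten pendant edges attached at a vertex $c_0$ and ten more at the adjacent vertex $c_1$, so $n=40$, $k=2$, the graph is triangle free and every pendant block is a $K_2$; all of your standing structural hypotheses hold. For the vertex $v_0=c_{10}$ opposite the two stars, $D(v_0)=100+10\cdot 11+10\cdot 10=310$, while the deepest vertex $u$ of the heaviest branch (a leaf at $c_0$, depth $1$) has $D(u)=1+18+119+30=168<D(v_0)$. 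Hence no argument that, like yours, uses only the expansion of $D_{G_0}$ along $B_0$ and the relative masses of the branches can conclude $D_{G_0}(u)>D_{G_0}(v_0)$: that inequality is simply not true for arbitrary graphs satisfying those hypotheses, and your sketch never invokes the one property that distinguishes $G_0$, namely maximality over the whole class $\mathfrak{C}_{n,k}$. Any correct proof must exploit that maximality by exhibiting a \emph{different} graph of $\mathfrak{C}_{n,k}$ in which $v_0$ gets strictly larger distance. That is what the paper does: it picks two cut vertices $w,w'$ of $B_0$ with $d(v_0,w)\geq d(v_0,w')$, writes $G_0\cong G_1\cup G_2$ with $V(G_1)\cap V(G_2)=\{w'\}$ and $B_0\subseteq G_1$, takes $z\in V(G_1)$ at maximum distance from $v_0$ (such a $z$ cannot be a cut vertex, and $d(v_0,z)>d(v_0,w)\geq d(v_0,w')$), and re-attaches $G_2$ at $z$ instead of $w'$; the new graph is still in $\mathfrak{C}_{n,k}$ and Lemma \ref{count}$(i)$ gives a strict increase of $D(v_0)$ --- contradiction. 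Applied to the example above, this relocation is exactly what certifies that the graph is not extremal.

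For the second assertion, your plan is in essence the paper's device for $k\geq 2$, and the ``bookkeeping'' you defer is genuinely available there: choose $G_1\supseteq B$ so that it also contains a second cut vertex of $G_0$ (possible since $k\geq 2$), let $w_1$ be a cut vertex of $G_1\setminus B$ farthest from $v_0$, and re-glue $G_2$ at $w_1$; the number of cut vertices is preserved precisely because $w$ and $w_1$ are both already cut vertices of $G_1$, and $d(v_0,w_1)>d(v_0,w)$ since every path from $v_0\in B$ to $w_1$ passes through $w$. However, this device has no analogue when $k=1$ (there is no other cut vertex to re-attach at), and your alternative of a ``compensating contraction'' is not developed. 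The paper needs an entirely separate argument for $k=1$: if the unique cut vertex carries $s\geq 3$ blocks, it normalizes pendant blocks into cycles via Lemma \ref{pendant block is cycle}, merges two of them into one cycle, and then beats $D_{G_0}(v_0)$ using Lemmas \ref{p-block to cycle} and \ref{Comparision} (comparison with $L_{n,n-1}$ and $C^n_{m_1,m_2}$). This whole case is missing from your proposal, and it is where the hypothesis $n\geq 5$ actually enters (for $n=4$, the star $K_{1,3}$ ties with $L_{4,3}$). A smaller point: the lemma is asserted for \emph{every} maximizing pair $(G_0,v_0)$, while Proposition \ref{pendant_blocks} only supplies \emph{some} maximizer whose pendant blocks are $K_2$'s or cycles, so your opening reduction proves a weaker statement than the one claimed; the paper's relocation arguments for $k\geq 2$ require no such normalization.
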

\begin{proof}

Let $D_{G_0}(v_0)=\max\{D_G(v): v\in V(G), G\in \mathfrak{C}_{n,k}, k\geq 1\}$. We consider the following two cases.

\noindent{\bf Case I:} $k=1$\\
In this case every block of $G_0$ is a pendant block and hence $v_0$ lies in a pendant block.  Suppose $n\geq 5$ and $G_0$ has more than two blocks. Let $B_1, B_2, \ldots, B_s$ be the blocks where $s\geq 3$ and $|V(B_i)|=m_i$ for $1\leq i\leq s$. 
Assume that $v_0\in V(B_1)$ and $w$ is the cut vertex of $G_0$. If $m_i=2$ for $2\leq i\leq s$, then $n=m_1+s-1$ and 
\begin{align*}
D_{G_0}(v_0)&=D_{B_1}(v_0)+(s-1)d(v_0, w)+s-1\\
		     &\leq \frac{m_1^2}{4}+(s-1)(\frac{m_1}{2}+1)  & \mbox{[by Lemma \ref{distance_v}]}\\
		     &=\frac{1}{4}(m_1^2+2m_1s+4s-2m_1-4)\\
		     &\leq D_{C_{m_1,s}^n}(v_0) & \mbox{[equality holds for $s=3$]}\\
		     &< D_{L{n,n-1}}(v_0') & \mbox{[by Lemma \ref{Comparision}]}
\end{align*}
where $v_0'$ is the pendant vertex of $L_{n,n-1}$. This contradicts that $D_{G_0}(v_0)=\max\{D_G(v): v\in V(G), G\in \mathfrak{C}_{n,k}, k\geq 1\}$.

Other possibility is $m_i\geq 3$ for some $2\leq i\leq s$. For $2\leq i\leq s$, if $m_i\geq 3$ and $B_i\ncong C_{m_i}$, then replace $B_i$ by $C_{m_i}$ such that $w$ remains a cut vertex in the new graph. Let the resulting graph be  $G_0'$. Then by Lemma \ref{pendant block is cycle}, it follows that $D_{G_0'}(v_0)=D_{G_0}(v_0)$. W. L. O. G. assume that $m_2\geq 3$. Let $H'$ be the subgraph of $G_0'$ corresponding to $B_2\cup B_3$ in $G_0$. Then $H'\cong C_{m_2,m_3}^{m_2+m_3-1}$ or $H'\cong L_{m_2+m_3-1, m_2+m_3-2}$ and $G_0'\cong H\cup H'$ such that $V(H)\cap V(H')=\{w\}$. The subgraph $H$ contains $B_1$. Construct a new graph $G_0''$ from $G_0'$ by replacing $H'$ by $C_{m_2+m_3-1}$. Then by Lemma \ref{p-block to cycle}, $D_{G_0''}(v_0)> D_{G_0'}(v_0)=D_{G_0}(v_0)$, which is a contradiction. Hence $B_1$ shares its cut vertex with exactly one other block.\\
 
\noindent {\bf Case II:} $k\geq 2$\\
 Suppose $v_0$ lies in a non-pendant block $B$ of $G_0$. Then there are at least two cut vertices $w$ and $w'$ of $G_0$ in $B$. Assume $d(v_0,w)\geq d(v_0,w')$. $G_0$ has two subgraphs $G_1$ and $G_2$ such that $G_0\cong G_1\cup G_2$ with $V(G_1)\cap V(G_2)=\{w'\}$. Assume that $B$ lies in $G_1$. Let $z\in V(G_1)$ such that $d(v_0,z)=\max\{d(v_0,v): v\in V(G_1)\}$. Note that $z$ can not be a cut vertex of $G_0$ and $d(v_0, w)< d(v_0,z)$. Construct a new graph $G_0' \in \mathfrak{C}_{n, k}$ from $G_1$ and $G_2$ by identifying $z$ and $w'$ (of $G_2$). i.e. $G_0'\cong G_1\cup G_2$ and $V(G_1)\cap V(G_2)=\{z\}$. By Lemma \ref{count} ($i$), 
\begin{align*}
D_{G_0}(v_0)&=D_{G_1}(v_0)+(|V(G_2)|-1)d(v_0,w')+D_{G_2}(w')\\
		     &< D_{G_1}(v_0)+(|V(G_2)|-1)d(v_0,z)+D_{G_2}(w')\\
		     &=D_{G_0'}(v_0).
\end{align*}
which is a contradiction. So $B$ must be pendant. 

Let $w$ be the cut vertex of $G_0$ in $B$. Suppose the cut vertex $w$ is shared by at least two other blocks $B_1$ and $B_2$. Then there exist two subgraphs $G_1$ and $G_2$ of $G_0$ such that $G_0\cong G_1\cup G_2$ and $V(G_1)\cap V(G_2)=\{w\}$ where $G_1$ contains $B$ and has at least $2$ cut vertices. Let $w_1$ be  a cut vertex in $G_1\setminus B$ at maximum distance from $v_0$. Construct $G_0'\in \mathfrak{C}_{n,k}$ from $G_1$ and $G_2$ by identifying  $w_1$ and $w$ (of $G_2$) i.e. $G_0'\cong G_1\cup G_2$ and $V(G_1)\cap V(G_2)=\{w_1\}$. Then by Lemma \ref{count} ($i$), it follows that $D_{G_0'}(v_0)> D_{G_0}(v_0)$, which is a contradiction. Hence  $B$ shares its cut vertex with exactly one other block. This completes the proof.
\end{proof}

\begin{theorem} \label{Dmax}
Let $z$ be the pendant vertex of $L_{n,n-k}$ and $G\in \mathfrak{C}_{n,k}$,  $1\leq k\leq n-3$. Then $D_G(v)\leq D_{L_{n,n-k}}(z)$ for any $v\in V(G)$. 
\end{theorem}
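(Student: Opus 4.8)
The plan is to fix an extremal pair and reduce it to $L_{n,n-k}$ by induction on $k$, peeling off the pendant block that contains the peripherian vertex. Concretely, I would let $G_0\in\mathfrak{C}_{n,k}$ and $v_0\in V(G_0)$ realize $\max\{D_G(v):G\in\mathfrak{C}_{n,k},\,v\in V(G)\}$; by Proposition \ref{pendant_blocks} I may assume $G_0$ is triangle free with every pendant block equal to $K_2$ or a cycle on at least $4$ vertices, and by Lemma \ref{Dmax@pendant} the vertex $v_0$ lies in a pendant block $B$ which (for $n\ge 5$) shares its cut vertex $w$ with exactly one other block. Writing $G_0\cong B\cup H$ with $V(B)\cap V(H)=\{w\}$, this last fact is the crucial bookkeeping input: because $w$ then lies in only one block of $H$, it is not a cut vertex of $H$, so $H\in\mathfrak{C}_{n-|V(B)|+1,\,k-1}$. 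The base of the induction is $k=0$, which is precisely Lemma \ref{distance_v} under the convention $L_{N,N}=C_N$; the reduction always drops the parameter by one until this base is reached.

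First I would treat $B=K_2$. Then $d(v_0,w)=1$ and $D_B(v_0)=1$, so Lemma \ref{count}$(i)$ gives $D_{G_0}(v_0)=1+(n-2)+D_H(w)=n-1+D_H(w)$. Since $H\in\mathfrak{C}_{n-1,k-1}$, the induction hypothesis bounds $D_H(w)\le\max_v D_H(v)\le D_{L_{n-1,n-k}}(z_1)$, where $z_1$ is the pendant vertex of $L_{n-1,n-k}$. The point is that $L_{n,n-k}$ decomposes identically: deleting its pendant vertex $z$ leaves $L_{n-1,n-k}$, and Lemma \ref{count}$(i)$ yields $D_{L_{n,n-k}}(z)=n-1+D_{L_{n-1,n-k}}(z_1)$. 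Hence $D_{G_0}(v_0)\le D_{L_{n,n-k}}(z)$ with no residual computation.

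Next I would treat $B=C_m$, $m\ge 4$. Using Lemma \ref{distance_v} to bound $D_{C_m}(v_0)$ together with $d(v_0,w)\le\lfloor m/2\rfloor$, Lemma \ref{count}$(i)$ gives $D_{G_0}(v_0)\le D_{C_m}(v^*)+(n-m)\lfloor m/2\rfloor+D_H(w)$, where $v^*$ is antipodal to $w$ in $C_m$ and $H\in\mathfrak{C}_{n-m+1,k-1}$. If the extremal graph for $H$ is of $L$-type, i.e. $m_2:=n-m-k+2\ge 3$, the induction hypothesis gives $D_H(w)\le D_{L_{n-m+1,\,m_2}}(\text{pendant})$, and the key observation is that the resulting upper bound is exactly $D_{C^n_{m,\,m_2}}(v^*)$, obtained by attaching $C_m$ at the pendant end of that $L$ via Lemma \ref{count}$(i)$; one checks $C^n_{m,\,m_2}\in\mathfrak{C}_{n,k}$. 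Then Lemma \ref{Comparision} gives $D_{C^n_{m,\,m_2}}(v^*)<D_{L_{n,n-k}}(z)$. The complementary boundary case is $m_2=2$, which forces $H=P_{n-m+1}$ (the unique graph with $N-2$ cut vertices) and hence $m=n-k$ and $G_0\cong L_{n,n-k}$ with $v_0$ a non-pendant vertex; here Lemma \ref{Dmax_Lnk} directly gives $D_{G_0}(v_0)<D_{L_{n,n-k}}(z)$.

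Combining the cases yields $D_{G_0}(v_0)\le D_{L_{n,n-k}}(z)$, and since $L_{n,n-k}\in\mathfrak{C}_{n,k}$ the reverse inequality is automatic, so $D_{L_{n,n-k}}(z)$ equals the global maximum and the theorem follows. I expect the main obstacle to be not the arithmetic — the heavy lifting is already packaged in Lemmas \ref{Comparision} and \ref{Dmax_Lnk} — but the structural bookkeeping: verifying at each peeling step that exactly one cut vertex is lost (so that $H\in\mathfrak{C}_{N,k-1}$), which rests squarely on the ``exactly one other block'' conclusion of Lemma \ref{Dmax@pendant}, and recognizing the induction-hypothesis bound for $H$ as a genuine vertex distance in $C^n_{m_1,m_2}$ so that Lemma \ref{Comparision} can be invoked rather than a fresh estimate. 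The only case escaping the induction is the single small instance $(n,k)=(4,1)$, the sole pair with $k\ge 1$ and $n<5$, which I would dispatch by direct inspection.
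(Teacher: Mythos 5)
Your proposal is correct and takes essentially the same route as the paper's proof: induction on $k$, with Proposition \ref{pendant_blocks} and Lemma \ref{Dmax@pendant} placing $v_0$ in a pendant block attached at exactly one other block, Lemma \ref{count} for the decomposition, Lemma \ref{Comparision} to eliminate the cycle-block case via $C^n_{m_1,m_2}$, and Lemma \ref{Dmax_Lnk} for the boundary case where $H$ is a path. The only deviations are organizational: you anchor the induction at $k=0$ (Lemma \ref{distance_v}, with the convention $L_{N,N}=C_N$) and keep $H$ in place, bounding $D_H(w)$ by the induction hypothesis, whereas the paper starts the induction at $k=1$ and physically replaces $H$ by $L_{n_2,n_2-k+1}$ before ruling out $B\ncong K_2$ --- but these amount to the same computation.
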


\begin{proof}
We use induction on the number of cut vertices $k$.\\

\noindent {\bf Base step:} $\mathfrak{C}_{4,1}=\{L_{4,3}, K_{1,3}\}$ and the result can easily be verified for $n=4$. Now suppose  $n\geq 5$. By Proposition \ref{pendant_blocks}, there exists a triangle free graph $G_0\in \mathfrak{C}_{n,1}$ having its blocks either $K_2$ or cycles on at least $4$ vertices such that $D_{G_0}(v_0)=\max\{D_G(v):v\in V(G), G\in\mathfrak{C}_{n,1}\}$. By Lemma \ref{Dmax@pendant}, either $G_0\cong C_{m_1,m_2}^n$ for some $m_1, m_2$ satisfying $m_1+m_2-1=n$ or $G_0\cong L_{n,n-1}$. But by Lemma \ref{Comparision}, $D_{L_{n,n-1}}(z)> D_{C_{m_1,m_2}^n}(v)$ for any $v\in V(C_{m_1,m_2}^n)$ where $z$ is the pendant vertex of $L_{n,n-1}$. Hence $G_0\cong L_{n,n-1}$. Now by Lemma \ref {Dmax_Lnk}, $v_0$ is the pendant vertex of $G_0 (\cong L_{n,n-1})$. So the result is true for $k=1$.\\

\noindent{\bf Induction hypothesis:} Let the result be true for $G\in \mathfrak{C}_{n,k-1}$, $k\geq 2$ and $n\geq k+2$. \\

\noindent{\bf Induction step:} By Proposition \ref{pendant_blocks}, there exists a triangle free graph $G_0\in \mathfrak{C}_{n,k}$ having each of its pendant blocks is either $K_2$ or a cycle on at least $4$ vertices such that $D_{G_0}(v_0)=\max\{D_G(v):v\in V(G), G\in\mathfrak{C}_{n,k}\}$. It is sufficient to show that $D_{L_{n,n-k}}(z)=D_{G_0}(v_0)$. As $k\geq 2$, $n\geq 5$. By Lemma \ref{Dmax@pendant}, $v_0$ belongs to some pendant block $B$ of $G_0$ and $B$ shares its cut vertex with exactly one other block. Let $w$ be the cut vertex of $G_0$ in $B$. Then $G_0\cong B \cup H$ with $V(B)\cap V(H)=\{w\}$. Let $|V(B)|=n_1$ and $|V(H)|=n_2=n-n_1+1$. Then $H\in \mathfrak{C}_{n_2, k-1}$. So $n_2\geq k+1$. \\

Suppose $n_2=k+1$, then $H$ is a path and $B\cong C_{n_1}, n_1\geq 4$. So $G_0\cong L_{n,n_1}$ and we get $D_{L_{n,n_1}}(v_0)>D_{L_{n,n_1}}(z)$, where $v_0$ is a non pendant vertex of $L_{n,n_1}$. This is a contradiction to Lemma \ref{Dmax_Lnk}. So $n_2\geq k+2$. Suppose $w$ as a vertex of $H$ is not the pendant vertex of $L_{n_2, n_2-k+1}$ ($H$ may be isomorphic to $L_{n_2,n_2-k+1}$).  Construct a new graph $G_0' \in \mathfrak{C}_{n,k}$ from $G_0$ by replacing $H$ by $ L_{n_2, n_2-k+1}$ such that $G_0'\cong B\cup  L_{n_2, n_2-k+1}$ and $V(B)\cap V(L_{n_2, n_2-k+1})=\{w\}$ where $w$ is the pendant vertex of $L_{n_2, n_2-k+1}$. Then by induction hypothesis $D_{ L_{n_2, n_2-k+1}}(w)\geq D_H(w)$. By Lemma \ref {count} we have 
\begin{align*}
D_{G_0'}(v_0)&=D_B(v_0)+(n_2-1)d(v_0,w)+D_{L_{n_2, n_2-k+1}}(w)\\
		  &\geq D_B(v_0)+(n_2-1)d(v_0,w)+D_H(w)\\
		  &=D_{G_0}(v_0).
\end{align*}
As  $D_{G_0}(v_0)=\max\{D_G(v):v\in V(G), G\in\mathfrak{C}_{n,k}\}$, $D_{G_0'}(v_0)=D_{G_0}(v_0)$. \\

We now claim that $B\cong K_2$. Suppose $B\ncong K_2$. Then $B\cong C_{n_1}, n_1\geq 4$ and $G_0'\cong C_{n_1, n_3}^n$ where $n_3=n_2-k+1$. By Lemma \ref{Comparision}, $D_{G_0'}(v_0)< D_{L_{n, n-k}}(u_0)$ where $u_0$ is the pendant vertex of $L_{n,n-k}$, which is a contradiction. Therefore $B\cong K_2$ and hence $G_0' \cong L_{n,n-k}$.  Now by Lemma \ref{Dmax_Lnk}, $v_0$ is the pendant vertex $z$ of $L_{n,n-k}$. Thus $D_{L_{n,n-k}}(z)=D_{G_0'}(v_0)=D_{G_0}(v_0)$. This completes the induction step and hence the proof.
\end{proof}

\section {Maximum Wiener index over $\mathfrak{C}_{n,k}$}\label{Wiener index}
Let $G\in \mathfrak{C}_{n,0}$. Then $G$ is a $2$-connected graph and we have the following  characterisation of graphs having maximum Wiener index over $2$-connected graphs. 
 \begin{lemma}[\cite{Jp}, Theorem 5] \label{plesnic} 
Let  $G\in \mathfrak{C}_{n,0}$, $n\geq 3$. Then $W(G)\leq W(C_n)$ and equality holds if and only if $G\cong C_n.$ Furthermore,  
\begin{equation*}
 W(C_n)=\begin{cases}
 \frac{n^3}{8} &\mbox{ if $n$ is even,}\\
 \frac {n(n^2-1)}{8} &\mbox{ if $n$ is odd}.
 \end{cases}
 \end{equation*}
 \end{lemma}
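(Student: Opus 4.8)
The plan is to prove the two parts separately: derive the inequality immediately from the per-vertex bound already available, and then isolate the equality case as the real work. For the closed form, note that $C_n$ is vertex-transitive, so $D_{C_n}(v)$ is the same for every $v$ and equals $\tfrac{n^2}{4}$ (resp.\ $\tfrac{n^2-1}{4}$) by Lemma \ref{distance_v}; hence $W(C_n)=\tfrac12\sum_{v}D_{C_n}(v)=\tfrac{n}{2}D_{C_n}(v)$, which gives $\tfrac{n^3}{8}$ for $n$ even and $\tfrac{n(n^2-1)}{8}$ for $n$ odd. For the inequality I would simply sum Lemma \ref{distance_v} over all vertices: since $D_G(u)\le D_{C_n}(v)$ for each $u\in V(G)$,
\[
W(G)=\tfrac12\sum_{u\in V(G)}D_G(u)\le \tfrac12\,n\,D_{C_n}(v)=W(C_n).
\]

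For the equality statement I first observe that, because every term of the sum above is bounded by the same constant $D_{C_n}(v)$, equality $W(G)=W(C_n)$ forces $D_G(u)=D_{C_n}(v)$ for \emph{every} $u\in V(G)$. So it suffices to characterise when a single vertex of a $2$-connected graph attains this maximal distance. The tool I would use is a breadth-first layering from $u$: writing $L_j$ for the set of vertices at distance $j$ from $u$ and $r=\mathrm{ecc}(u)$ for its eccentricity, I claim that $2$-connectivity forces $|L_j|\ge 2$ for every $1\le j\le r-1$. Indeed, if some intermediate layer were a single vertex $x$, then every walk from $u$ to a vertex $y$ with $d(u,y)>j$ would have to pass through a vertex at distance exactly $j$ (distances to $u$ change by at most one along an edge), i.e.\ through $x$; thus $x$ would be a cut vertex, contradicting $G\in\mathfrak{C}_{n,0}$.

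Granting the layer bound, the counting is routine: with $|L_0|=1$, $|L_j|\ge 2$ for $1\le j\le r-1$ and $|L_r|\ge 1$ summing to $n$, the quantity $D_G(u)=\sum_{j}j\,|L_j|$ is maximised by pushing all slack into the outermost layer, giving $D_G(u)\le r(n-r)$; and $r(n-r)\le \lfloor n/2\rfloor\,\lceil n/2\rceil=D_{C_n}(v)$, since $n\ge 1+2(r-1)+1$ yields $r\le n/2$. Tracking the equality conditions is exactly the point that resolves the theorem: attaining $D_G(u)=D_{C_n}(v)$ forces $r=\lfloor n/2\rfloor$ together with the extremal layer profile, and in every case this makes $|L_1|=2$, i.e.\ $\deg(u)=2$. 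If this holds at \emph{every} vertex, then $G$ is connected and $2$-regular, hence $G\cong C_n$. The main obstacle is precisely this equality bookkeeping (together with establishing the $|L_j|\ge 2$ layer lemma cleanly), since the bare inequality in Lemma \ref{distance_v} does not record when it is tight. An alternative route for the equality direction would be to combine Lemma \ref{edge effect}$(ii)$ — which shows a maximiser must be minimally $2$-connected — with Lemma \ref{min 2-connected}, but that still needs a distance argument to rule out non-cyclic minimally $2$-connected graphs such as $K_{2,3}$, so I prefer the self-contained layering.
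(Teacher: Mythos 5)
The paper offers no proof of this lemma at all: it is imported as a black box from Plesn\'ik (the citation [\cite{Jp}, Theorem 5]), so there is no in-paper argument to measure yours against, and the only question is whether your blind proof is correct. It is. The closed form via vertex-transitivity of $C_n$ and Lemma \ref{distance_v} is right; summing the per-vertex bound of Lemma \ref{distance_v} over $V(G)$ gives $W(G)\le \tfrac{n}{2}D_{C_n}(v)=W(C_n)$; and the equality analysis -- the only part requiring new work, since the cited inequality records no tightness information -- holds up. Equality does force $D_G(u)=D_{C_n}(v)$ at every vertex; your singleton-layer claim is the correct use of $2$-connectedness (a singleton intermediate BFS layer is a cut vertex, by the discrete intermediate-value argument on distances along a walk); the optimization $D_G(u)\le r(n-r)\le\lfloor n/2\rfloor\lceil n/2\rceil$ under the constraints $|L_0|=1$, $|L_j|\ge 2$ for $1\le j\le r-1$, $|L_r|\ge 1$ is correct, with equality only for the profile $1,2,\dots,2,1$ ($n$ even) or $1,2,\dots,2,2$ ($n$ odd); and in either case $\deg(u)=|L_1|=2$ at every vertex, so $G$ is connected and $2$-regular, hence $G\cong C_n$. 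Two remarks on what your route buys: your layering argument actually reproves Lemma \ref{distance_v} itself (now with equality conditions tracked), so the proof is self-contained rather than resting on the unproved per-vertex bound; and it supplies the uniqueness statement that the paper simply inherits from the literature. Your closing observation is also apt: the alternative route through Lemma \ref{edge effect}$(ii)$ and Lemma \ref{min 2-connected} would only reduce to minimally $2$-connected triangle-free graphs and still needs a distance argument to exclude graphs like $K_{2,3}$, so the layering approach is the cleaner one.
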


\begin{lemma}\label{pendant blocks}
Let $G$ be a graph in $\mathfrak{C}_{n,k}$, $k\geq 1$ having maximum Wiener index. Then every pendant block of $G$ is either $K_2$ or a cycle on at least $4$ vertices.
\end{lemma}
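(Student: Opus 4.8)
The plan is to combine the triangle-freeness already available with a block-replacement argument. By Remark \ref{Remark_WI} a Wiener-maximal $G$ is triangle-free, so $G$ has no block of size $3$, since any $2$-connected graph on three vertices is a triangle. In particular no pendant block is a triangle, and it remains only to prove that every pendant block on $m \geq 4$ vertices is the cycle $C_m$.

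Suppose for contradiction that $B$ is a pendant block with $m = |V(B)| \geq 4$ and $B \not\cong C_m$. Let $w$ be the unique cut vertex of $G$ in $B$ and write $G \cong B \cup H$ with $V(B) \cap V(H) = \{w\}$. I would replace $B$ by a cycle, forming $G' \cong C_m \cup H$ with $V(C_m) \cap V(H) = \{w\}$. The first step is to confirm $G' \in \mathfrak{C}_{n,k}$: since $B$ is a block on at least three vertices it is $2$-connected, so no vertex of $B$ other than $w$ is a cut vertex of $G$, and likewise no vertex of $C_m$ other than $w$ is a cut vertex of $G'$; as $H$ is untouched and $w$ remains a cut vertex separating the cycle from $H$, the graphs $G$ and $G'$ have exactly the same cut vertices.

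The second step is the Wiener-index comparison. Applying Lemma \ref{count} (ii) at $w$ to both graphs and using $|V(B)| = |V(C_m)| = m$, the contributions from $W(H)$ and $D_H(w)$ cancel, leaving
\begin{align*}
W(G') - W(G) = \bigl(W(C_m) - W(B)\bigr) + (|V(H)| - 1)\bigl(D_{C_m}(w) - D_B(w)\bigr).
\end{align*}
Because $B \in \mathfrak{C}_{m,0}$ and $B \not\cong C_m$, Lemma \ref{plesnic} gives $W(C_m) - W(B) > 0$, while Lemma \ref{distance_v} gives $D_{C_m}(w) - D_B(w) \geq 0$ (all vertices of $C_m$ realise the maximal distance). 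Since $|V(H)| \geq 2$, the right-hand side is strictly positive, so $W(G') > W(G)$, contradicting maximality. Hence $B \cong C_m$.

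I expect the only subtle point to be the bookkeeping guaranteeing $G' \in \mathfrak{C}_{n,k}$---that replacing a $2$-connected pendant block by a cycle of the same order neither creates nor destroys any cut vertex---whereas the index inequality falls out directly from Lemmas \ref{plesnic}, \ref{distance_v} and \ref{count}.
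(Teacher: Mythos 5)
Your proposal is correct and follows essentially the same route as the paper: rule out size-$3$ pendant blocks via Remark \ref{Remark_WI}, replace a non-cycle pendant block $B$ on $m\geq 4$ vertices by $C_m$ at the same cut vertex, and compare Wiener indices through Lemma \ref{count} (ii) together with Lemma \ref{distance_v} and Lemma \ref{plesnic}. The only difference is presentational: you make explicit the check that $G'$ stays in $\mathfrak{C}_{n,k}$ and that $B\in\mathfrak{C}_{m,0}$, which the paper leaves implicit.
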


\begin{proof}
Let $B$ be a pendant block in $G$ and let $w$ be the cut vertex of $G$ in $B$. Then there exists a subgraph $H$ (on at least $2$ vertices) of $G$ such that $G=H\cup B$ with $V(H)\cap V(B)=\{w\}$. Suppose $B\ncong K_2$. Then by Remark \ref{Remark_WI}, $|V(B)|\geq 4$. Let $|V(B)|=m$. Suppose $B\ncong C_m$. Construct a graph $G'$ from $G$ by replacing $B$ by $C_m$. Then using Lemma \ref{count} ($ii$), we get
\begin{align*}
W(G)&=W(H)+W(B)+(|V(H)|-1)D_B(w)+(m-1)D_H(w)\\
&\hskip -1cm\mbox{using Lemma \ref{distance_v} and Lemma \ref{plesnic}}\\
        &<W(H)+W(C_m)+(|V(H)|-1)D_{C_m}(w)+ (m-1)D_H(w)\\
        &=W(G'),
 \end{align*}
which is a contradiction. This completes the proof.
 \end{proof}
 
 \begin{lemma}\label{Cmn-Lnk}
Let $m_1,m_2 \geq 4$ be two integers such that $n=m_1+m_2-1$. Then $$W(C_n)\geq W(L_{n,n-1})>W(C_{m_1,m_2}^n).$$
\end{lemma}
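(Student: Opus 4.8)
The plan is to establish the two inequalities of the chain separately, in both cases reducing everything to the closed forms already available in the excerpt. Throughout I write $n=m_1+m_2-1$ and note that $m_1,m_2\ge 4$ forces $n\ge 7$; this lower bound is precisely what drives both inequalities, and tracking where it is exactly tight explains why the left inequality is stated with $\ge$ rather than $>$.

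For the left inequality $W(C_n)\ge W(L_{n,n-1})$ I would simply subtract the two closed forms. Here $W(C_n)$ comes from Lemma~\ref{plesnic} and $W(L_{n,n-1})$ is the $g=n-1$ instance of (\ref{WI_Lnk}); since $g=n-1$ has parity opposite to $n$, the two parity branches of (\ref{WI_Lnk}) line up with the two branches of $W(C_n)$. Carrying out the subtraction, I expect the difference to collapse to a single quadratic in $n$ in each parity, namely $W(C_n)-W(L_{n,n-1})=\tfrac18(n-2)(n-4)$ when $n$ is even and $\tfrac18(n-1)(n-7)$ when $n$ is odd. Both are nonnegative for $n\ge 7$, which gives the inequality; the odd case vanishes exactly at $n=7$ (equivalently $m_1=m_2=4$), so the nonstrict $\ge$ is best possible.

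For the right inequality $W(L_{n,n-1})>W(C_{m_1,m_2}^n)$ the cleanest route is to decompose both graphs through their unique cut vertex $w$ using Lemma~\ref{count}~(ii). Writing $c(m)=W(C_m)$ and $d(m)=D_{C_m}(w)$ for the vertex distance in a cycle (supplied by Lemma~\ref{distance_v}), one obtains
\[
W(L_{n,n-1})=c(n-1)+d(n-1)+(n-1),
\qquad
W(C_{m_1,m_2}^n)=c(m_1)+c(m_2)+(m_1-1)d(m_2)+(m_2-1)d(m_1),
\]
since $L_{n,n-1}$ is $C_{n-1}$ with a pendant edge and $C_{m_1,m_2}^n$ (for $n=m_1+m_2-1$) is $C_{m_1}$ and $C_{m_2}$ glued at $w$; alternatively one can read $W(C_{m_1,m_2}^n)$ off (\ref{n=m_1+m_2+k-2}) at $k=1$. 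Either way the comparison becomes a polynomial inequality in $m_1,m_2$, and inspecting the cubic part shows the difference of the two sides has leading term $\tfrac18\,m_1m_2(m_1+m_2)$, which is strictly positive, so the inequality is governed by its top-order behaviour and can fail, if at all, only for small $m_1,m_2$.

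The main obstacle is the lower-order parity bookkeeping in this second step: both $c$ and $d$ branch on the parity of their argument, so the difference must be checked in the cases where each of $m_1,m_2$ is even or odd (with $m_1\leftrightarrow m_2$ symmetry reducing the labour), and each case produces parity-dependent constants that must be shown not to overwhelm the positive leading term $\tfrac18 m_1m_2(m_1+m_2)$. I would organise this by writing the difference as this dominant term plus an explicitly bounded remainder, and verifying case by case that the remainder is smaller in magnitude for all $m_1,m_2\ge 4$; the extremal instance $m_1=m_2=4$, where the gap is exactly $42-40=2$, shows the bound is tight and clarifies why $m_i\ge 4$ rather than $m_i\ge 3$ is required. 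Concatenating the two parts yields $W(C_n)\ge W(L_{n,n-1})>W(C_{m_1,m_2}^n)$.
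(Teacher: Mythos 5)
Your proposal is correct and takes essentially the same route as the paper: the left inequality is proved there by exactly your subtraction of closed forms, yielding $\tfrac{1}{8}(n^2-6n+8)$ for $n$ even and $\tfrac{1}{8}(n^2-8n+7)$ for $n$ odd (nonnegative for $n\geq 7$, vanishing at $n=7$), and the right inequality by comparing the $k=1$ instance of (\ref{n=m_1+m_2+k-2}), bounded above by its all-even branch, against the lower parity branch of $W(L_{n,n-1})$ from (\ref{WI_Lnk}). The ``dominant term plus bounded remainder'' step that you leave as a plan is executed in the paper precisely as you envision, via the bound $m_1m_2\geq 2(m_1+m_2)$ for $m_1,m_2\geq 4$, which collapses the difference to $3(m_1+m_2)-16>0$.
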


\begin{proof}
By (\ref{n=m_1+m_2+k-2}), 

{\small
\begin{align*}\label{n=m_1+m_2-1}
W(C_{m_1,m_2}^n)&=
\begin{cases}
\frac{1}{8}(m_1^3+m_2^3+2m_2m_1^2+2m_1m_2^2-2m_1^2-2m_2^2) &\mbox{ both $m_1, m_2$ even},\\
\frac{1}{8}(m_1^3+m_2^3+2m_1^2m_2+2m_1m_2^2-2m_1^2-2m_2^2-2m_1-m_2+2) &\mbox{ $m_1$ even, $m_2$ odd},\\
\frac{1}{8}(m_1^3+m_2^3+2m_1^2m_2+2m_1m_2^2-2m_1^2-2m_2^2-m_1-2m_2+2) &\mbox{ $m_1$ odd, $m_2$ even}, \\
\frac{1}{8}(m_1^3+m_2^3+2m_2m_1^2+2m_1m_2^2-2m_1^2-2m_2^2-3m_1-3m_2+4)  &\mbox{ both $m_1, m_2$ odd}.
\end{cases}\\
& \leq \frac{1}{8}(m_1^3+m_2^3+2m_2m_1^2+2m_1m_2^2-2m_1^2-2m_2^2).
\end{align*}}
From (\ref{WI_Lnk}) we find
\begin{align*}
W(L_{n,n-1})&=\begin{cases}
			 \frac{1}{8}(n^3-n^2+6n-8) &\mbox{ if n is even,}\\	
 			  \frac{1}{8}(n^3-n^2+7n-7) &\mbox{ if n is odd.}	
                        \end{cases}\\
   		   &\geq \frac{1}{8}(n^3-n^2+6n-8)\\
		   &=\frac{1}{8}(m_1^3+m_2^3+3m_1^2m_2+3m_1m_2^2-4m_1^2-4m_2^2-8m_1m_2+11m_1+11m_2-16).
\end{align*}
So we get
\begin{align*}
W(L_{n,n-1})-W(C_{m_1,m_2}^n)&\geq m_1^2m_2+m_1m_2^2+11m_1+11m_2 - 2(m_1^2+m_2^2+4m_1m_2+8)\\
						   &=m_1m_2(m_1+m_2)+11(m_1+m_2)-2(m_1+m_2)^2-4m_1m_2-16\\
						   &=m_1m_2(m_1+m_2-4)+11(m_1+m_2)-2(m_1+m_2)^2-16\\
						   &\geq 2(m_1+m_2)(m_1+m_2-4)+11(m_1+m_2)-2(m_1+m_2)^2- 16\\ & \hskip -3cm \mbox{ (This inequality holds because $m_1+m_2-4>0$ and $m_1m_2\geq 2(m_1+m_2)$ for $m_1,m_2\geq 4$)}\\
						   &=3(m_1+m_2)-16\\
						   &>0   
\end{align*}
Further we have
\begin{align*}
W(C_n)-W(L_{n,n-1})&=\begin{cases}
					\frac{1}{8}(n^2-6n+8)  &\mbox{ if n is even,}\\
					\frac{1}{8}(n^2-8n+7)   &\mbox{ if n is odd}.	
                                     \end{cases}\\
                                     &\geq 0 \;\;\;\mbox{($>0$ for $n\geq 8$)}.
\end{align*}
This completes the proof.
\end{proof}

 \begin{proposition}\label {pendant-cut}
 Let $G$ has maximum Wiener index over $\mathfrak{C}_{n,k}, n \geq 7$. Then a cut vertex of $G$ is shared by at most two pendant blocks. Furthermore, for $k\geq 2$, if a cut vertex is shared by two pendant blocks then both of them are $K_2$.
 \end{proposition}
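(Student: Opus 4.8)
The plan is to argue by contradiction from the maximality of $W(G)$, using two cut-vertex-preserving surgeries that each strictly raise the Wiener index while keeping the graph inside $\mathfrak{C}_{n,k}$. By Lemma \ref{pendant blocks} I may assume every pendant block of $G$ is $K_2$ or a cycle on at least four vertices. Both surgeries are analysed through Lemma \ref{count}$(ii)$: if $G\cong S\cup H$ with $V(S)\cap V(H)=\{w\}$ and I replace $S$ by a graph $S'$ on the same number of vertices (with $w$ again the shared vertex), then
\[
W(G')-W(G)=\bigl(W(S')-W(S)\bigr)+\bigl(|V(H)|-1\bigr)\bigl(D_{S'}(w)-D_S(w)\bigr).
\]
The first surgery (M1) merges two pendant cycles $C_p,C_q$ meeting at $w$ into one cycle $C_{p+q-1}$ through $w$; here $S=C_{p,q}^{\,p+q-1}$ and $S'=C_{p+q-1}$, so $W(S')-W(S)>0$ by Lemma \ref{c-cmn} and $D_{S'}(w)-D_S(w)>0$ by the distance estimate used in Lemma \ref{p-block to cycle}, whence $W(G')>W(G)$. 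The second surgery (M2) absorbs pendant edges at $w$ into a pendant cycle at $w$, or, when $w$ has no pendant cycle, converts $t\ge 3$ pendant edges together with $w$ into a cycle $C_{t+1}$; in the latter case $S=K_{1,t}$, $S'=C_{t+1}$, and an elementary computation gives $W(C_{t+1})-W(K_{1,t})\ge-1$ and $D_{C_{t+1}}(w)-D_{K_{1,t}}(w)\ge 1$, so the displayed difference is positive as soon as $|V(H)|\ge 3$.

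For the first assertion I would suppose a cut vertex $w$ carries at least three pendant blocks. If at least two are cycles, (M1) contradicts maximality. Otherwise at most one is a cycle, so at least two are edges: if a pendant cycle is present I absorb one of these edges into it by (M2); if none is present, then $w$ carries $t\ge 3$ pendant edges, and for $k\ge 2$ the vertex $w$ also meets a non-pendant block, so $|V(H)|\ge 3$ and (M2) turns the edges into $C_{t+1}$. The only residual case is $k=1$ with $w$ carrying only pendant edges, i.e.\ $G\cong K_{1,n-1}$; here I compare directly, since $W(K_{1,n-1})=(n-1)^2<W(L_{n,n-1})$ for $n\ge 7$ by (\ref{WI_Lnk}) and $L_{n,n-1}\in\mathfrak{C}_{n,1}$. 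Every branch contradicts maximality, so $w$ has at most two pendant blocks.

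For the second assertion let $k\ge 2$ and let $w$ carry exactly two pendant blocks; since $k\ge 2$, $w$ must also meet a non-pendant block, so $|V(H)|\ge 3$. If both pendant blocks are cycles, (M1) applies; if one is a cycle and the other an edge, (M2) absorbs the edge into the cycle. In either case $W$ strictly increases, a contradiction, so both pendant blocks are $K_2$. Note this is exactly where the hypothesis $k\ge 2$ enters: for $k=1$ the cycle-plus-edge configuration $L_{n,n-1}$ is in fact optimal, and there (M2) is unavailable because absorbing the edge would destroy the unique cut vertex and leave $\mathfrak{C}_{n,1}$.

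The step I expect to require the most care is the bookkeeping of cut vertices: I must verify that each surgery keeps the cut-vertex count equal to $k$. Merging or enlarging a cycle converts former pendant-block vertices into degree-two interior cycle vertices (hence not cut vertices) and creates no new cut vertex, while $w$ remains a cut vertex precisely because it still meets a further block — this is exactly the role of the condition $|V(H)|\ge 3$ (and of a surviving block after merging). The genuine obstacle is the all-pendant-edges case: a single edge slid between two cut vertices can be Wiener-neutral for symmetric graphs (e.g.\ a double star), so one cannot linearize such a vertex by an edge-by-edge local move; the resolution is to recast the entire bundle of $t\ge 3$ pendant edges at $w$ as one cycle $C_{t+1}$ in a single step, which is cut-vertex-preserving only when a non-pendant block remains at $w$, and to dispose of the leftover $k=1$ star by the global comparison with $L_{n,n-1}$, for which $n\ge 7$ is exactly the threshold that makes the inequality hold.
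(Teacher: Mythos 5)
Your proposal is correct and takes essentially the same route as the paper's proof: both argue by contradiction using Lemma \ref{count}$(ii)$-based replacements at the shared cut vertex --- merging two pendant cycles into one cycle (via Lemma \ref{c-cmn} together with the distance estimate from Lemma \ref{p-block to cycle}), absorbing a pendant edge into a pendant cycle, and turning a bundle of pendant edges into a cycle --- with cut-vertex counts preserved in each move. The only cosmetic differences are that the paper converts just three pendant edges ($K_{1,3}$ into $C_4$), a move that also covers the $k=1$ all-edges (star) configuration, whereas you convert all $t$ pendant edges into $C_{t+1}$ (which forces $k\geq 2$) and then dispose of the $k=1$ star by a direct comparison with $L_{n,n-1}$; both versions work for $n\geq 7$.
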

 
 \begin{proof}
 Let $B_1$ and $B_2$ be two pendant blocks of $G$ sharing the cut vertex $w$ such that $|V(B_1)|=m_1$ and $|V(B_2)|=m_2$. We consider the following two cases.\\
{\bf Case-I:} $n=m_1+m_2-1$\\
 By Lemma \ref {pendant blocks}, $G$ is isomorphic to either $L_{n,n-1}$ or to $C_{m_1,m_2}^n$. By Lemma \ref{Cmn-Lnk}, $W(L_{n,n-1})> W(C_{m_1,m_2}^n)$ and the result follows.\\
 
\noindent{\bf Case II:} $n> m_1+m_2-1$\\
In this cases there exists a subgraph $H$ (on at least two vertices) of $G$ such that  $G\cong H\cup (B_1\cup B_2)$ and $V(H)\cap V(B_1\cup B_2)=\{w\}$. First suppose both $m_1, m_2 >2$. Then by Lemma \ref{pendant blocks}, $m_1, m_2\geq 4$ and $B_1\cup B_2\cong C_{m_1,m_2}^{m_1+m_2-1}$. Construct a graph $G'$ from $G$ by replacing $B_1\cup B_2$ by $C_{m_1+m_2-1}$ such that $w$ remains a cut vertex shared by $H$ and $C_{m_1+m_2-1}$. Then
 \begin{align*}
 W(G)&=W(H)+W(C_{m_1,m_2}^{m_1+m_2-1})+(m_1+m_2-2)D_H(w)+(|V(H)|-1)D_{C_{m_1,m_2}^{m_1+m_2-1}}(w)\\
 & \mbox {using Lemma \ref{Cmn-Lnk}}\\
         &<W(H)+W(C_{m_1+m_2-1})+(m_1+m_2-2)D_H(w)+\frac{1}{4}(|V(H)|-1)(m_1^2+m_2^2)\\
         &<W(H)+W(C_{m_1+m_2-1})+(m_1+m_2-2)D_H(w)+\frac{1}{4}(|V(H)|-1)D_{C_{m_1+m_2-1}}(w)\\
         &=W(G'),
 \end{align*}
 which is a contradiction.
 
 Now suppose exactly one of $m_1$ or $m_2$ is greater than $2$. Let $m_1>2$ and $m_2=2$. Then by Lemma \ref{pendant blocks}, $B_1\cup B_2\cong L_{m_1+1,m_1}$ and $G\cong H\cup L_{m_1+1,m_1}$ such that $V(H)\cup V(L_{m_1+1,m_1})=\{w\}$, where $w$ is the cut vertex of $L_{m_1+1,m_1}$. Construct $G'$ from $G$ by replacing $B_1\cup B_2$ by $C_{m_1+1}$ such that $V(H)\cap V(C_{m_1+1})=\{w\}$. Then
\begin{align*}
W(G)&=W(H)+W(L_{m_1+1, m_1})+m_1D_H(w)+(|V(H)|-1)D_{L_{m_1+1,m_1}}(w)\\
 & \mbox {using Lemma \ref{Cmn-Lnk}}\\
	&\leq W(H)+W(C_{m_1+1})+m_1D_H(w)+\frac{1}{4}(|V(H)|-1)(m_1^2+4)\\
	&< W(H)+W(C_{m_1+1})+m_1D_H(w)+\frac{1}{4}(|V(H)|-1)((m_1+1)^2-1)\\
	&\leq W(H)+W(C_{m_1+1})+m_1D_H(w)+(|V(H)|-1)D_{C_{m_1+1}}(w)\\
	&=W(G'),
\end{align*}
 which is a contradiction. This shows that if $2$ or more pendant blocks share a cut vertex, then all of them are $K_2$. So, as $n\geq 7$, H can not be a block and hence $k\geq 2$. Suppose there are more than $2$ pendant blocks in $G$ isomorphic to $K_2$ sharing $w$. Then there exists a subgraph $H'$ (on at least $4$ vertices) of $G$ such that $G\cong H'\cup K_{1,3}$ such that $V(H')\cap V(K_{1,3})=\{w\}$. Construct $G'$ from $G$ by replacing $K_{1,3}$ by $C_4$ such that $V(H')\cap V(C_4)=\{w\}$. Then
 \begin{align*}
W(G)&=W(H')+W(K_{1,3})+3D_{H'}(w)+(|V(H')|-1)D_{K_{1,3}}(w)\\
	&= W(H')+9+3D_{H'}(w)+3(|V(H')|-1)\\
	&< W(H')+8+3D_{H'}(w)+4(|V(H')|-1) &\mbox{[as $|V(H')|\geq 4$]}\\
	&=W(G'),
\end{align*} 
which is a contradiction. This completes the proof.
 \end{proof}
 
For $4\leq n\leq 6$, the graphs having maximum Wiener index in $\mathfrak{C}_{n,1}$ have been shown in Table \ref{MaxWI_1}. The following theorem characterises the graphs having maximum Wiener index in $\mathfrak{C}_{n,1}$ for $n\geq 7$.
 
\begin{table}[h!]
\begin{center}
\begin{tabular}{|c|c|c|}
\hline
$n$&Graphs $G$ having maximum W.I. over $\mathfrak{C}_{n,1}$&$W(G)$\\
\hline
$4$&  \hskip 2cm\begin{tikzpicture}[scale=.5]
\filldraw (1,0) circle [radius=.5mm]--(2,0) circle [radius=.5mm]--(2.9,0.6) circle [radius=.5mm];
\filldraw (2,0)--(2.9,-0.6) circle [radius=.5mm];
\end{tikzpicture}
&$9$\\
\hline
$5$& $L_{5,4}$ and  \hskip 0.5cm \begin{tikzpicture}[scale=.5]
\filldraw (1.1,0.6) circle [radius=.5mm]--(2,0) circle [radius=.5mm]--(2.9,0.6) circle [radius=.5mm];
\filldraw (2,0)--(2.9,-0.6) circle [radius=.5mm];
\filldraw (1.1,-0.6) circle [radius=.5mm]--(2,0) ;
\end{tikzpicture}
&$16$\\
\hline
$6$& $L_{6,5}$ and  \hskip 0.5cm
 \begin{tikzpicture}[scale=.5]
\filldraw (1.1,0.6) circle [radius=.5mm]--(2,0) circle [radius=.5mm]--(2.9,0.6) circle [radius=.5mm]--(3.8,0) circle [radius=.5mm]--(2.9,-0.6) circle [radius=.5mm];
\draw (2,0)--(2.9,-0.6); 
\draw (1.1,-0.6)  circle [radius=.5mm]--(2,0);
\end{tikzpicture}
& $26$\\
\hline
\end{tabular}
\end{center}
\label{default}
\caption{ The graphs having maximum Wiener index over $\mathfrak{C}_{n,1}$, $4\leq n\leq 6$} \label{MaxWI_1}
\end{table}

\begin{theorem}\label{one cut vertex}
Let $G\in \mathfrak{C}_{n,1}, n\geq 7$. Then $W(G)\leq W(L_{n,n-1})$ and equality holds if and only if $G\cong L_{n,n-1}$.
\end{theorem}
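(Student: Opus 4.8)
The plan is to take a graph $G$ that maximizes the Wiener index over $\mathfrak{C}_{n,1}$, determine its block structure, and show it can only be $L_{n,n-1}$. First I would note that since $G$ has exactly one cut vertex $w$, the block--cut tree of $G$ is a star centred at $w$, so every block of $G$ contains $w$ and contains no other cut vertex; that is, every block is a pendant block. By Lemma \ref{pendant blocks}, each of these blocks is therefore either $K_2$ or a cycle on at least four vertices. Since $n\geq 7$, Proposition \ref{pendant-cut} applies and shows that the cut vertex $w$ is shared by at most two pendant blocks. On the other hand $w$, being a cut vertex, lies in at least two blocks, so in fact $G$ consists of exactly two blocks $B_1$ and $B_2$ meeting only at $w$. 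Writing $m_1=|V(B_1)|$ and $m_2=|V(B_2)|$, we have $m_1+m_2-1=n$.

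Next I would do a short case analysis on the pair $(m_1,m_2)$. Because $n\geq 7$, the two blocks cannot both be $K_2$ (that would force $n=3$). If both $B_1$ and $B_2$ are cycles, then $m_1,m_2\geq 4$ and $G\cong C_{m_1,m_2}^{\,n}$ with $n=m_1+m_2-1$; Lemma \ref{Cmn-Lnk} then gives $W(G)=W(C_{m_1,m_2}^{\,n})<W(L_{n,n-1})$, contradicting the maximality of $G$. The only remaining possibility is that exactly one block is $K_2$ and the other is a cycle $C_{n-1}$, and this graph is precisely $L_{n,n-1}$. Hence every maximizer of $W$ over $\mathfrak{C}_{n,1}$ is isomorphic to $L_{n,n-1}$.

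Finally, because the maximizer is forced to be $L_{n,n-1}$, this graph is the unique maximizer, which yields $W(G)\leq W(L_{n,n-1})$ for every $G\in\mathfrak{C}_{n,1}$ with equality if and only if $G\cong L_{n,n-1}$. Indeed, if $W(G)=W(L_{n,n-1})$ then $G$ attains the maximum and so must itself be isomorphic to $L_{n,n-1}$ by the argument above.

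I do not expect a genuine obstacle here, since the only delicate comparison, namely the strict inequality $W(L_{n,n-1})>W(C_{m_1,m_2}^{\,n})$ for two cycles, has already been isolated as Lemma \ref{Cmn-Lnk}, and the rest is a structural reduction via Lemma \ref{pendant blocks} and Proposition \ref{pendant-cut}. The one point that merits a little care is the claim that $G$ has exactly two blocks: this comes from combining the ``at most two pendant blocks'' bound of Proposition \ref{pendant-cut} with the elementary fact that a cut vertex always lies in at least two blocks.
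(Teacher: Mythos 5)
Your proposal is correct and follows essentially the same route as the paper: both reduce the maximizer, via Lemma \ref{pendant blocks} and Proposition \ref{pendant-cut}, to the two candidates $L_{n,n-1}$ and $C_{m_1,m_2}^{m_1+m_2-1}$, and then invoke Lemma \ref{Cmn-Lnk} to eliminate the latter. The only difference is that you spell out details the paper leaves implicit (a cut vertex lies in at least two blocks, hence exactly two; both blocks being $K_2$ would force $n=3$), which is a sound filling-in rather than a new argument.
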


\begin{proof}
Let $G'$ be a graph having maximum Wiener index in $\mathfrak{C}_{n,1}$. Then all the blocks of $G'$ are pendant. So by Lemma \ref{pendant blocks} and Proposition \ref{pendant-cut}, $G'\cong L_{n,n-1}$ or $G'\cong C_{m_1,m_2}^{m_1+m_2-1}$. By Lemma \ref{Cmn-Lnk}, $W(L_{n,n-1})>W(C_{m_1,m_2}^{m_1+m_2-1})$. So $G'\cong L_{n,n-1}$ and the result follows.
\end{proof} 

\begin{lemma}\label{Comparison}
Let $m_1, m_2\geq 4$ and $n=m_1+m_2$. Then $W(C_{m_1m_2}^n)\leq W(L_{n,n-2})$ and equality holds iff $m_1=m_2=4$.
\end{lemma}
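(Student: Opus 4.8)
The plan is to prove the inequality by direct computation from the closed forms (\ref{WI_Lnk}) and (\ref{n=m_1+m_2+k-2}), since both graphs lie in $\mathfrak{C}_{n,2}$: indeed $C_{m_1,m_2}^n$ has $n+2-(m_1+m_2)=2$ cut vertices when $n=m_1+m_2$, and $L_{n,n-2}$ has $2$ cut vertices as well. First I would specialise (\ref{n=m_1+m_2+k-2}) to $k=2$. The four parity cases share a common cubic part, and rewriting $m_1^3+m_2^3+2m_1^2m_2+2m_1m_2^2=n^3-m_1m_2\,n$ with $n=m_1+m_2$, the ``both even'' case collapses neatly to
\[
W(C_{m_1,m_2}^n)=\frac{1}{8}\bigl(n^3-m_1m_2(n-8)\bigr)\qquad(m_1,m_2\text{ both even}),
\]
while the remaining three cases are strictly smaller than this expression, by $\tfrac{2m_1+m_2}{8}$, $\tfrac{m_1+2m_2}{8}$, and $\tfrac{3(m_1+m_2)}{8}$ respectively. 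This gives the upper bound $W(C_{m_1,m_2}^n)\le\frac18\bigl(n^3-m_1m_2(n-8)\bigr)$, with equality exactly when both $m_1,m_2$ are even.

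Next I would evaluate (\ref{WI_Lnk}) at $g=n-2$, $n-g=2$, obtaining
\[
W(L_{n,n-2})=\tfrac{1}{8}(n^3-2n^2+20n-32)\ (n\text{ even}),\qquad \tfrac{1}{8}(n^3-2n^2+19n-34)\ (n\text{ odd}).
\]
The crucial simplification is the factorisation $2n^2-20n+32=2(n-8)(n-2)$ together with $m_1m_2-2(n-2)=(m_1-2)(m_2-2)$. With these, subtracting the ``both even'' expression from the even-$n$ value of $W(L_{n,n-2})$ yields the clean identity
\[
W(L_{n,n-2})-\tfrac18\bigl(n^3-m_1m_2(n-8)\bigr)=\frac{(n-8)(m_1-2)(m_2-2)}{8}.
\]

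I would then split into the parity cases of $(m_1,m_2)$. When both are even ($n$ even) the upper bound is an equality, so $W(L_{n,n-2})-W(C_{m_1,m_2}^n)=\frac{(n-8)(m_1-2)(m_2-2)}{8}$; since $m_1,m_2\ge4$ this is $\ge0$, and it vanishes iff $n-8=0$, i.e. iff $m_1=m_2=4$. When both are odd ($n$ even), $W(C_{m_1,m_2}^n)$ is strictly below the ``both even'' expression, which in turn is $\le W(L_{n,n-2})$, giving strict inequality. When $m_1,m_2$ have opposite parity ($n$ odd), I would combine the $n$-odd value of $W(L_{n,n-2})$ with the appropriate strict gap; using $n=m_1+m_2$ the difference simplifies to $\frac{(m_1-2)\bigl[(n-8)(m_2-2)+1\bigr]}{8}$, or the symmetric expression, which is strictly positive since $m_1,m_2\ge4$ forces $n\ge9$.

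The main obstacle is purely the bookkeeping: keeping the four parity cases straight and verifying that the equality case surfaces only in the both-even, $n=8$ configuration. The two factorisations above are what make this manageable; without them the expressions look forbidding, but once applied, each case reduces to a product of nonnegative factors whose vanishing is transparent.
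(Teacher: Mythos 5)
Your proposal is correct and takes essentially the same route as the paper: both specialise (\ref{WI_Lnk}) and (\ref{n=m_1+m_2+k-2}) at $k=2$, split into the four parity cases, and compare directly, with the equality case isolated at $m_1=m_2=4$. The only difference is cosmetic but pleasant: where the paper bounds $m_1m_2\geq 2n$ and settles for the inequality $W(L_{n,n-2})-W(C_{m_1,m_2}^n)\geq \tfrac{1}{8}(4n-32)$, you obtain the exact identity $\tfrac{1}{8}(n-8)(m_1-2)(m_2-2)$ (and its odd-$n$ analogue), which makes the equality analysis immediate.
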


\begin{proof}
Using $k=n-2$ in (\ref{WI_Lnk}) we get
\begin{equation*}
W(L_{n,n-2})=
\begin{cases}
\frac{1}{8}(n^3-2n^2+20n-32) \mbox{ if $n$ is even,}\\
\frac{1}{8}(n^3-2n^2+19n-34) \mbox{ if $n$ is odd.}
\end{cases}
\end{equation*}
Using $n=m_1+m_2$ and $k=2$ in (\ref{n=m_1+m_2+k-2}) the Wiener index of $C_{m_1, m_2}^n$ can be computed as 
\begin{equation*}
W(C_{m_1,m_2}^n)=
\begin{cases}
\frac {1}{8}(n^3-m_1m_2(n-8)) &\mbox{ if both $m_1$ and $m_2$ are even,}\\
\frac{1}{8}(n^3-m_1m_2(n-8)-n-m_1 ) &\mbox{ if $m_1$ is even and $m_2$ odd,}\\
\frac{1}{8}(n^3-m_1m_2(n-8)-n-m_2) &\mbox{ if $m_1$ is odd $m_2$ is even,}\\
\frac{1}{8}(n^3-m_1m_2(n-8)-3n)&\mbox{ if both $m_1$ and $m_2$ are odd.}
\end{cases}
\end{equation*}
First suppose $n$ is even. In this case 
$$W(C_{m_1,m_2}^n)\leq \frac {1}{8}(n^3-m_1m_2(n-8))$$ 
and equality holds if and only if both $m_1$ and $m_2$ are even. As $n\geq 8$, $m_1m_2\geq 2(m_1+m_2)=2n.$ Now
\begin{align*}
W(L_{n,n-2})-W(C_{m_1,m_2}^n)&\geq \frac{1}{8}(m_1m_2(n-8)-2n^2+20n-32) &\mbox{[equality iff both $m_1, m_2$ are even]}\\
						   &\geq \frac{1}{8}(2n(n-8)-2n^2+20n-32)\\
						   &\geq \frac{1}{8}(4n-32)\\
						   &\geq 0 
\end{align*}
and equality holds if and only if $m_1=m_2=4$. Now suppose $n$ is odd and hence $n\geq 9$. Without loss of generality assume that $m_1$ is even and $m_2$ is odd. Then 
\begin{align*}
W(L_{n,n-2})-W(C_{m_1,m_2}^n)&=\frac{1}{8}(m_1m_2(n-8)-2n^2+20n-34+m_1)\\
						   &\geq \frac{1}{8}(2n(n-8)-2n^2+20n-34+m_1)\\
						   &=\frac{1}{8}(4n-34+m_1)\\
						   &>0 
\end{align*}
This completes the proof.
\end{proof}

There are not many non-isomorphic graphs in $\mathfrak{C}_{5,2}$, $\mathfrak{C}_{6,2}$, $\mathfrak{C}_{7,2}$, $\mathfrak{C}_{8,2}$ and $\mathfrak{C}_{9,2}$. The graphs having maximum Wiener index over $\mathfrak{C}_{n,2}$, $5\leq n\leq 9$ have been shown in Table \ref{MaxWI_2}. The following theorem characterises the graph having maximum Wiener index in $\mathfrak{C}_{n,2}$ for $n\geq 10$.

 \begin{table}[h!]
\begin{center}
\begin{tabular}{|c|c|c|}
\hline
$n$&Graphs $G$ having maximum W.I. over $\mathfrak{C}_{n,2}$&$W(G)$\\
\hline
$5$& \hskip 2 cm\begin{tikzpicture}[scale=.5]
\filldraw (0,0) circle [radius=.5mm]--(1,0) circle [radius=.5mm]--(2,0) circle [radius=.5mm]--(2.9,0.6) circle [radius=.5mm];
\filldraw (2,0)--(2.9,-0.6) circle [radius=.5mm];
\end{tikzpicture}&
$18$\\
\hline
$6$& $L_{6,4}$ and \hskip .3cm \begin{tikzpicture}[scale=.5]
\filldraw (-0.9,0.6,0) circle [radius=.5mm]--(0,0) circle [radius=.5mm]--(1,0) circle [radius=.5mm]--(1.9,0.6) circle [radius=.5mm];
\filldraw (-0.9,-0.6) circle [radius=.5mm]--(0,0);
\filldraw (1,0)--(1.9,-0.6) circle [radius=.5mm];
\end{tikzpicture}
& $29$\\
\hline
$7$&\hskip 2cm \begin{tikzpicture}[scale=.5]
\filldraw (1,0) circle [radius=.5mm]--(2,0) circle [radius=.5mm]--(2.9,0.6) circle [radius=.5mm]--(3.8,0) circle [radius=.5mm]--(2.9,-0.6) circle [radius=.5mm];
\draw (2,0)--(2.9,-0.6); 
\filldraw (0.1,0.6)circle [radius=.5mm]--(1,0);
\filldraw(1,0)--(0.1,-0.6) circle [radius=.5mm];
\end{tikzpicture}
& $44$\\
\hline
$8$ &$C_{4,4}^8$ and $L_{8,6}$
& $64$\\
\hline
$9$& $L_{9,7}$ and \begin{tikzpicture}[scale=.5]
\filldraw (0.1,0.5) circle [radius=.5mm]--(1,0) circle [radius=.5mm]--(2,0) circle [radius=.5mm]--(2.9,0.6) circle [radius=.5mm]--(3.8,0.6) circle [radius=.5mm]--(4.7,0) circle [radius=.5mm]--(3.8,-0.6) circle [radius=.5mm]--(2.9,-0.6) circle [radius=.5mm];
\draw (2,0)--(2.9,-0.6); 
\draw (0.1,-0.5) circle [radius=.5mm]--(1,0);
\end{tikzpicture}
& $88$ \\
\hline
\end{tabular}
\end{center}
\label{default}
\caption{ The graphs having maximum Wiener index over $\mathfrak{C}_{n,2}$ for $5\leq n\leq 9$} \label{MaxWI_2}
\end{table}

\begin{theorem}\label{2 cut vertices}
Let $G\in \mathfrak{C}_{n,2}$, $n\geq 10$. Then $W(G)\leq W(L_{n,n-2})$ and the equality holds if and only if $G\cong L_{n,n-2}$. 
\end{theorem}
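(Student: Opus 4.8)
The plan is to show that a Wiener-maximal $G\in\mathfrak{C}_{n,2}$ must be isomorphic either to $L_{n,n-2}$ or to $C_{m_1,m_2}^{n}$ with $m_1+m_2=n$ and $m_1,m_2\ge 4$, and then to invoke Lemma \ref{Comparison}, which gives $W(C_{m_1,m_2}^{n})\le W(L_{n,n-2})$ with equality only when $m_1=m_2=4$, i.e.\ only when $n=8$. Since $n\ge 10$ forbids $m_1=m_2=4$, this forces $G\cong L_{n,n-2}$ and makes it the unique maximizer. Thus the real content is the structural reduction to these two shapes, not the final inequality.

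First I would collect the structural facts available for an extremal $G$. By Remark \ref{Remark_WI} the graph is triangle free; by Lemma \ref{pendant blocks} every pendant block is $K_2$ or a cycle on at least $4$ vertices; and by Proposition \ref{pendant-cut} each cut vertex carries at most two pendant blocks, both $K_2$ whenever there are two. Since $G$ has exactly two cut vertices $u,v$, the block--cut tree has no room for an intermediate cut vertex between them, so $u$ and $v$ lie in a common central block $B_1$ and every remaining block is pendant at $u$ or at $v$; in particular each of $u,v$ owns at least one pendant block, and owns a cyclic pendant block only when it owns exactly one pendant block. Finally, an all-$K_2$ (tree) graph with exactly two cut vertices is a double star, whose two centres carry at most two leaves each by Proposition \ref{pendant-cut}, hence has at most six vertices; as $n\ge 10$, $G$ must contain at least one cycle.

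Next I would funnel all the ``bulk'' into a single pendant cycle by a sequence of Wiener-increasing, cut-vertex-preserving moves, each justified through Lemma \ref{count}$(ii)$. If a pendant cycle shares its cut vertex with a central cycle, merging the two cycles into one larger cycle strictly raises $W$ (this is the localized form of Lemma \ref{c-cmn}, applied to the two-cycle subgraph via Lemma \ref{count}$(ii)$), and it keeps $v$ on the enlarged central cycle, so the number of cut vertices is unchanged. Repeating this leaves at most one cyclic block. If the surviving cycle is the central block, then $u$ and $v$ carry only pendant edges; relocating a pendant edge from its cycle vertex to the free end of another pendant edge keeps exactly two cut vertices and strictly increases $W$, because a short distance computation shows the relocated leaf gains distance to the whole cycle, and iterating converts the central cycle into a pendant cycle with a length-two tail, namely $L_{n,n-2}$. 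A cherry (two pendant $K_2$'s at one cut vertex) is dissolved by absorbing one of its two leaves into the unique large cycle while leaving a single pendant edge behind: this preserves both cut vertices and raises $W$ since a vertex inserted into a large cycle is farther from the rest of $G$ than a leaf hanging at a cut vertex. After all these reductions the only survivors are the central edge $uv$ bearing one pendant cycle together with, at the opposite end, either a single pendant edge (yielding $L_{n,n-2}$) or a second pendant cycle (yielding $C_{m_1,m_2}^{n}$).

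The main obstacle is precisely this middle step: each relocation or merge must be shown both to strictly increase the Wiener index and to keep the cut-vertex count equal to two, and the second requirement is the delicate one, since a naive rearrangement (for instance stretching a cherry into a pendant path $v$-$x_1$-$x_2$) would promote $x_1$ to a third cut vertex and leave $\mathfrak{C}_{n,2}$. I expect every admissible move to reduce, via Lemma \ref{count}$(ii)$, to an inequality between cycle-distance sums that is favorable because a single long cycle spreads its vertices farther than a shorter cycle plus scattered leaves; only the balanced two-cycle competitor $C_{m_1,m_2}^{n}$ resists such elementary moves, and that is exactly the comparison already settled by Lemma \ref{Comparison}, which is why the bound becomes strict once $n\ge 10$.
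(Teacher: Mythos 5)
Your end-game is right---reduce to the two shapes $L_{n,n-2}$ and $C_{m_1,m_2}^n$ and finish with Lemma \ref{Comparison}---but the reduction by local moves has genuine gaps, and the first is fatal. Your merging move (pendant cycle sharing cut vertex $u$ with a central cycle $\to$ one big cycle through $v$) does \emph{not} preserve the number of cut vertices: after the merge, $u$ lies on a single cycle and belongs to no other block (by Proposition \ref{pendant-cut} a cut vertex with a cyclic pendant block carries no second pendant block), so $u$ ceases to be a cut vertex and the graph drops into $\mathfrak{C}_{n,1}$. Moreover the claim that this merge ``strictly raises $W$'' is false in general: writing $G\cong H\cup K$ with $V(H)\cap V(K)=\{v\}$ and applying Lemma \ref{count}$(ii)$, the merge raises $W(K)$ (Lemma \ref{c-cmn}) but can lower $D_K(v)$; e.g.\ for two $4$-cycles sharing $u$ with $v$ antipodal to $u$ one gets $D_K(v)=14$ versus $D_{C_7}(v)=12$ and $W(K)=40$ versus $W(C_7)=42$, so the net change is $2-2(|V(H)|-1)<0$ as soon as $|V(H)|\geq 3$. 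This trade-off is not a technicality: it is exactly why $C_{4,4}^8$ ties $L_{8,6}$ (Table \ref{MaxWI_2}) and why two-cycle graphs beat $L_{n,n-3}$ for small $n$ (Table \ref{MaxWI_3}), so no soft ``one long cycle spreads vertices farther'' argument can decide these comparisons; the same objection applies to your unproved relocation and cherry-dissolution steps. Finally, none of your structural facts constrains the \emph{non-pendant} block: Lemma \ref{pendant blocks} speaks only about pendant blocks, so the central block containing both cut vertices could be any triangle-free minimally $2$-connected graph (e.g.\ $K_{2,3}$), a case your move repertoire never touches.

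The paper sidesteps all three problems with a decomposition you did not use. Choose the cut vertex $w$ so that $H_1$, the maximal subgraph consisting of the non-pendant block together with everything attached at the other cut vertex $w'$, has $n_1\geq 6$ vertices; then $H_1\in\mathfrak{C}_{n_1,1}$, and Lemma \ref{count}$(ii)$ permits replacing $H_1$ wholesale by $L_{n_1,n_1-1}$ glued at its pendant vertex, because Theorem \ref{one cut vertex} (with Table \ref{MaxWI_1} for $n_1=6$) bounds $W(H_1)$ and Theorem \ref{Dmax} bounds $D_{H_1}(w)$. This is where the distance results of Section \ref{Distance} do the real work, and it handles an arbitrary central block in one stroke while keeping both cut vertices. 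After that only three shapes of $H_2$ survive (Proposition \ref{pendant-cut}): $K_2$, giving $L_{n,n-2}$ and the equality case; $K_{1,2}$, eliminated by an explicit polynomial comparison; and $C_{n_2}$, eliminated by Lemma \ref{Comparison}. To salvage your plan you would need to (i) replace the merge by moves that stay inside $\mathfrak{C}_{n,2}$, (ii) prove, not assert, a Wiener inequality for each move, and (iii) add an argument for non-cycle central blocks---at which point you would essentially have reconstructed the paper's recursion on the number of cut vertices.
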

\begin{proof}
Let $G\in \mathfrak{C}_{n,2}$ and $w, w'$ be the two cut vertices in $G$. As we are maximizing the Wiener index, by Lemma \ref{pendant blocks} we can assume that each pendant block of $G$ is either $K_2$ or a cycle on at least $4$ vertices. Since $G\in \mathfrak{C}_{n,2}$, it has exactly one non-pendant block, say $B$. As $n\geq 10$, either the maximal connected subgraph containing $B$ and no other blocks at $w$ or the maximal connected subgraph containing $B$ and no other blocks at $w'$ has $6$ or more vertices. Let $H_1$ be the maximal connected subgraph containing $B$ and no other blocks at $w$ and assume that $|V(H_1)|=n_1\geq 6$. Take the union of pendant blocks at $w$ as $H_2$ and let $|V(H_2)|=n_2\geq 2$. Then $G\cong H_1\cup H_2$ and $V(H_1)\cap V(H_2)=\{w\}$. Also $H_1\in \mathfrak{C}_{n,1}$. Construct $G'$ from $G$ by replacing $H_1$ by $L_{n_1, n_1-1}$ such that $w$ is the pendant vertex and $w'$ is the cut vertex of $L_{n_2, n_2-1}$. Then we have
\begin{align*}
W(G)&=W(H_1)+W(H_2)+(n_2-1)D_{H_1}(w)+(n_1-1)D_{H_2}(w)\\
 	&\mbox {observing Theorem \ref{Dmax}, Theorem \ref{one cut vertex} and Table \ref{MaxWI_1}} \\
        &\leq W(L_{n_1,n_1-1})+W(H_2)+(n_2-1)D_{L_{n_1,n_1-1}}(w)+ (n_1-1)D_{H_2}(w)\\
        &=W(G')
\end{align*}
and equality holds iff $H_1\cong L_{n_1, n_1-1}$ and $w$ is the pendant vertex of $H_1$. $G'\cong H_2\cup L_{n_1,n_1-1}$ and $V(H_2)\cap V(L_{n_1,n_1-1})=\{w\}$. As we are maximizing the Wiener index, by Proposition \ref{pendant-cut}, we may assume that either $H_2\cong K_2$, $H_2\cong K_{1,2}$ with $w$ as its non pendant vertex or $H_2\cong C_{n_2}$ for $n_2\geq 4$. If $H_2\cong K_2$, then $G'\cong L_{n, n-2}$ and the result follows. For the remaining two possibilities of $H_2$, $G\ncong L_{n,n-2}$. \\

If $H_2 \cong K_{1,2}$ with $w$ as its non pendant vertex, then $n=n_1+2$ and $G' \cong K_{1,3}\cup C_{n_1-1}$ with $V(K_{1,3})\cap V(C_{n_1-1})=\{w'\}$ where $w'$ is a pendant vertex of $K_{1,3}$.  By counting we get 
$$W(G')\leq \frac{1}{8}(n_1^3+3n_1^2+31n_1-3)$$ and 
$$W(L_{n, n-2})=W(L_{n_1+2,n_1})\geq \frac{1}{8}(n_1^3+4n_1^2+23n_1+4).$$ 
This gives 
\begin{equation*}
W(L_{n,n-2})-W(G')\geq \frac{1}{8}(n_1^2-8n_1+7)>0. \hskip 1cm \mbox {[as $n_1\geq 8$]}
\end{equation*}
Thus $W(G')<W(L_{n, n-2})$ and hence  $W(G)<W(L_{n, n-2}).$\\

Finally, if $H_2\cong C_{n_2}$, $n_2\geq 4$, then $G'\cong C_{n_1-1,n_2}^n$. By Lemma \ref{Comparison}, $W(G')=W(C_{n_1-1, n_2}^n)< W(L_{n,n-2})$ and hence $W(G)<W(L_{n, n-2})$. This completes the proof.
\end{proof}

\begin{lemma} \label{Comparison_3}
Let $m_1,m_2\geq 4$ and $n=m_1+m_2+1$. If $n\geq 14$, then $W(C_{m_1,m_2}^n)< W(L_{n,n-3})$.
\end{lemma}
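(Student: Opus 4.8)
The plan is to follow the same template as Lemmas \ref{Cmn-Lnk} and \ref{Comparison}: obtain closed polynomial expressions for both Wiener indices as functions of $n$ and of the symmetric quantity $p:=m_1m_2$, form their difference, and bound it below using the inequality $m_1m_2\geq 2(m_1+m_2)$ valid for $m_1,m_2\geq 4$ (equality only at $m_1=m_2=4$).

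First I would specialize the two master formulas. Setting $k=3$ (i.e. $n=m_1+m_2+1$) in (\ref{n=m_1+m_2+k-2}) gives $W(C_{m_1,m_2}^n)$ in the four parity cases; collecting the symmetric combinations $m_1^3+m_2^3$, $m_1^2m_2+m_1m_2^2$ and $m_1^2+m_2^2$ in terms of $s:=m_1+m_2=n-1$ and $p$, each case takes the shape
$$W(C_{m_1,m_2}^n)=\tfrac18\big[(n-1)^3+2(n-1)^2+c_1(n-1)+c_0-(n-13)\,p\big],$$
where the coefficient of $p$ is $-(n-13)/8$, which is negative for $n\geq 14$. Setting $g=n-3$ in (\ref{WI_Lnk}) gives $W(L_{n,n-3})=\tfrac18(n^3-3n^2+39n-85)$ when $n$ is odd and $W(L_{n,n-3})=\tfrac18(n^3-3n^2+38n-88)$ when $n$ is even.

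Next I would form $W(L_{n,n-3})-W(C_{m_1,m_2}^n)$. Since its coefficient of $p$ is now $+(n-13)/8>0$ for $n\geq 14$, the difference is smallest when $p$ is smallest, so I invoke $m_1m_2\geq 2(m_1+m_2)=2(n-1)$, which holds because $m_1,m_2\geq 4$. Substituting this bound cancels the cubic (and $n^2$) terms and leaves a linear polynomial in $n$: for the both-even case the difference is at least $\tfrac12(n-13)$, and for the both-odd case at least $\tfrac18(7n-51)$, both strictly positive for $n\geq 14$.

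The main obstacle, and the only genuinely delicate point, is the two mixed-parity cases, where the specialization of (\ref{n=m_1+m_2+k-2}) produces an asymmetric linear term $6m_1+7m_2$ (resp. $7m_1+6m_2$) that does not reduce to $s$ and $p$ alone. After writing it as $6(n-1)+m_2$ (with $m_2$ the odd block), I would additionally use $m_2\leq n-5$, forced by $m_1\geq 4$, to absorb the stray term; combined with $p\geq 2(n-1)$ this yields a difference at least $\tfrac14(2n-25)>0$ for $n\geq 14$. The parity bookkeeping and the intricate algebra of substituting $k=3$ into (\ref{n=m_1+m_2+k-2}) are the bulk of the work, and the threshold $n\geq 14$ is precisely what makes the coefficient $n-13$ of $p$ positive and the four resulting linear lower bounds strictly positive.
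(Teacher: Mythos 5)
Your proposal is correct and takes essentially the same approach as the paper: both specialize (\ref{n=m_1+m_2+k-2}) at $k=3$, compute $W(L_{n,n-3})$ from (\ref{WI_Lnk}), and lower-bound the difference using $m_1m_2\geq 2(m_1+m_2)$, with the additional fact $m_1\geq 4$ (equivalently $m_2\leq n-5$) absorbing the asymmetric term in the mixed-parity case. Your three linear lower bounds $\tfrac{1}{2}(n-13)$, $\tfrac{1}{8}(7n-51)$ and $\tfrac{1}{4}(2n-25)$ all check out, and your $(s,p)$-repackaging with $p$-coefficient $(n-13)/8$ is a transparent rewriting of the paper's factorization $m_1m_2(m_1+m_2-12)-2(m_1+m_2)^2+28(m_1+m_2)+m_1-50$; indeed your mixed-case quantity $5n-55-m_2$ is identically the paper's $4(m_1+m_2)+m_1-50$.
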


\begin{proof}
As $n=m_1+m_2+1$, $C_{m_1,m_2}^n\in \mathfrak{C}_{n,3}$. Using $k=3$ in (\ref{n=m_1+m_2+k-2}), we get  \\

$W(C_{m_1,m_2}^n)$
{\small
\begin{align*}
        &=\begin{cases}
	                    \frac{1}{8}(m_1^3+m_2^3+2m_1^2m_2+2m_1m_2^2+2m_1^2+2m_2^2+16m_1m_2+8m_1+8m_2 )         &\mbox{$m_1$ even , $m_2$ even,}\\
	                    \frac{1}{8}(m_1^3+m_2^3+2m_1^2m_2+2m_1m_2^2+2m_1^2+2m_2^2+16m_1m_2+6m_1+7m_2-2)	  &\mbox{$m_1$ even , $m_2$ odd,}\\
                            \frac{1}{8}(m_1^3+m_2^3+2m_1^2m_2+2m_1m_2^2+2m_1^2+2m_2^2+16m_1m_2+7m_1+6m_2-2)       &\mbox{$m_1$ odd, $m_2$ even,}\\
	               	   \frac{1}{8}(m_1^3+m_2^3+2m_1^2m_2+2m_1m_2^2+2m_1^2+2m_2^2+16m_1m_2+5m_1+5m_2-4 )	  &\mbox{$m_1$ odd, $m_2$ odd},
       \end{cases}
\end{align*}}
and for $n=m_1+m_2+1$, 
\begin{align*}
W(L_{n,n-3})&=\begin{cases}
               \frac{1}{8}(m_1^3+m_2^3+ 3m_1^2m_2+3m_1m_2^2+35m_1+35m_2-52)    \mbox{ if $n$ is even,}\\
               \frac{1}{8}(m_1^3+m_2^3+ 3m_1^2m_2+3m_1m_2^2+36m_1+36m_2-48)\mbox{ if $n$ is odd},
              \end{cases}
\end{align*}

Suppose $n$ is even. Note that in this case either $m_1$ is even and $m_2$ is odd, or $m_1$ is odd and $m_2$ is even. The resulting $C_{m_1,m_2}^n$ are isomorphic. So we may assume $m_1$ is even and $m_2$ is odd. This gives
\begin{align*}
W(L_{n,n-3})-W(C_{m_1, m_2}^n) &=\frac{1}{8}(m_1^2m_2+m_1m_2^2-2m_1^2-2m_2^2-16m_1m_2+29m_1+28m_2-50)\\
						    &= \frac{1}{8}(m_1m_2(m_1+m_2-12)-2(m_1+m_2)^2+28(m_1+m_2)+m_1-52)\\
						    &> \frac{1}{8}(2(m_1+m_2)(m_1+m_2-12)-2(m_1+m_2)^2+28(m_1+m_2)+m_1-52)\\
						    &\mbox{[Inequality holds because $m_1+m_2-12>0$ and here $m_1m_2>2(m_1+m_2)$]}\\
						    &= \frac{1}{8}(4(m_1+m_2)+m_1-50)\\
						    &>0. \hskip 6cm\mbox{[as $m_1+m_2\geq 13$]}
\end{align*}

Now suppose $n$ is odd. In this case $m_1, m_2$ are either both even or both odd. $W(C_{m_1,m_2}^n)$ is maximum when both $m_1, m_2$ are even. So
$$W(C_{m_1,m_2}^n) \leq \frac{1}{8}(m_1^3+m_2^3+2m_1^2m_2+2m_1m_2^2+2m_1^2+2m_2^2+16m_1m_2+8m_1+8m_2 ).$$
Similar counting as in the $n$ even case gives
\begin{equation*}
W(L_{n,n-3})-W(C_{m_1, m_2}^n) > \frac{1}{8}(4(m_1+m_2)-48)> 0. \hskip 1cm\mbox{[as $m_1+m_2 \geq 14$]}
\end{equation*}
This completes the proof.
\end{proof}

\begin{theorem}\label{3 cut vertices}
Let $ G\in \mathfrak{C}_{n,3}$, $n\geq 14$. Then $W(G)\leq W(L_{n,n-3})$ and equality holds if and only if $G\cong L_{n,n-3}$. 
\end{theorem}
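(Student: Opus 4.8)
The plan is to mirror the structure of the proof of Theorem \ref{2 cut vertices}, peeling off a pendant block and applying the one-fewer-cut-vertex result together with the distance bound of Theorem \ref{Dmax}. First I would take $G\in\mathfrak{C}_{n,3}$ of maximum Wiener index and invoke Lemma \ref{pendant blocks} to assume every pendant block is either $K_2$ or a cycle on at least $4$ vertices, and Proposition \ref{pendant-cut} (valid since $n\geq 14\geq 7$) to control how pendant blocks may share cut vertices: a cut vertex is shared by at most two pendant blocks, and if two are shared then both are $K_2$. Since $G$ has exactly three cut vertices, there is a limited combinatorial picture of how the non-pendant blocks and the cut vertices are arranged, and I would isolate one cut vertex $w$ together with the pendant material hanging off it.

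The core reduction is the same splitting trick used before: write $G\cong H_1\cup H_2$ with $V(H_1)\cap V(H_2)=\{w\}$, where $H_2$ collects the pendant block(s) at $w$ and $H_1$ is the maximal connected subgraph at $w$ carrying the remaining structure, chosen (using $n\geq 14$) so that $H_1$ is large enough to apply the induction-type input. The subgraph $H_1$ lies in $\mathfrak{C}_{n_1,2}$, so by Theorem \ref{2 cut vertices} (together with the small-case Table \ref{MaxWI_2}) its Wiener index and the distance $D_{H_1}(w)$ are maximized when $H_1\cong L_{n_1,n_1-2}$ with $w$ at the pendant vertex; here Theorem \ref{Dmax} supplies the needed bound $D_{H_1}(w)\le D_{L_{n_1,n_1-2}}(w)$. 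Applying Lemma \ref{count}~$(ii)$,
\begin{align*}
W(G)&=W(H_1)+W(H_2)+(n_2-1)D_{H_1}(w)+(n_1-1)D_{H_2}(w)\\
    &\le W(L_{n_1,n_1-2})+W(H_2)+(n_2-1)D_{L_{n_1,n_1-2}}(w)+(n_1-1)D_{H_2}(w)\\
    &=W(G'),
\end{align*}
with equality forcing $H_1\cong L_{n_1,n_1-2}$ and $w$ its pendant vertex. Thus $G'\cong H_2\cup L_{n_1,n_1-2}$ glued at $w$.

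It then remains to analyze the possibilities for $H_2$ dictated by Proposition \ref{pendant-cut}: either $H_2\cong K_2$, or $H_2\cong K_{1,2}$ with $w$ its center, or $H_2\cong C_{n_2}$ with $n_2\ge 4$. The case $H_2\cong K_2$ yields $G'\cong L_{n,n-3}$, giving the claimed extremal graph. The remaining two cases must be ruled out by direct Wiener-index comparisons: the cycle case gives $G'\cong C_{m_1,m_2}^n$ and is dispatched by Lemma \ref{Comparison_3} (whose threshold $n\ge 14$ is exactly why the hypothesis is stated this way), while the $K_{1,2}$ case requires an explicit computation of $W(G')$ against $W(L_{n,n-3})$ analogous to the $H_2\cong K_{1,2}$ estimate in Theorem \ref{2 cut vertices}. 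I expect the main obstacle to be bookkeeping at the splitting step: with three cut vertices one must verify that $H_1$ really can be chosen to land in $\mathfrak{C}_{n_1,2}$ with $n_1$ large enough (this is where the numerical bound $n\ge 14$ is consumed), and that the equality analysis genuinely pins down $L_{n,n-3}$ rather than some competing configuration, so the careful part is confirming that Proposition \ref{pendant-cut} leaves exactly the three listed shapes for $H_2$ and no hidden alternatives.
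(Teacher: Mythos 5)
Your plan is essentially the paper's own proof: the same splitting $G\cong H_1\cup H_2$ at a cut vertex $w$, the same replacement of the two-cut-vertex part $H_1$ by $L_{n_1,n_1-2}$ with $w$ at its pendant vertex (justified by Theorem \ref{Dmax} and Theorem \ref{2 cut vertices}, with Table \ref{MaxWI_2} covering small $n_1$), and the same three-way case analysis on the pendant part $H_2$: the case $H_2\cong K_2$ yields $L_{n,n-3}$, the case $H_2\cong C_{n_2}$ yields $C_{m_1,m_2}^n$ and is dispatched by Lemma \ref{Comparison_3}, and the case $H_2\cong K_{1,2}$ is dispatched by an explicit comparison with $W(L_{n,n-3})$. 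The one step you flag but leave open, namely the existence of a cut vertex $w$ whose pendant blocks can be detached to leave $H_1\in\mathfrak{C}_{n_1,2}$ with $n_1$ large enough, is exactly what the paper's s-pendant-block machinery supplies: Proposition \ref{s-pendant blocks} guarantees at least two s-pendant blocks, two such blocks meet in at most one vertex, and a short size argument from $n\geq 14$ (together with Proposition \ref{pendant-cut}) produces an s-pendant block $B$ with $n-|V(B)|+1\geq 9$, after which the paper proceeds exactly as you describe.
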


\begin{proof}
Let $G\in \mathfrak{C}_{n,3}$. As $n\geq 14$, there exists an s-pendant block $B$ in $G$ such that $n-|V(B)|+1\geq 9$. By Lemma \ref{pendant blocks}, we may assume that $B$ is either $K_2$ or a cycle on at least $4$ vertices. Let $|V(B)|=n_1$ and $w$ be the cut vertex of $G$ in $B$. \\

If $B$ does not share its cut vertex with any other pendant block, then there exists $H\in \mathfrak{C}_{n-n_1+1,2}$ such that $G\cong B\cup H$ and $V(B)\cap V(H)=\{w\}$. Construct a graph $G'$ from $G$ by replacing $H$ by $L_{n-n_1+1,n-n_1-1}$ such that $w$ is the pendant vertex of $L_{n-n_1+1,n-n_1-1}$. Then 
\begin{align*}
W(G)&= W(B)+W(H)+(|V(H)|-1)D_{B}(w)+(|V(B)|-1)D_H(w)\\
         &\leq W(B)+W(L_{n-n_1+1,n-n_1-1})+ (|V(H)-1|)D_B(w)+(|V(B)|-1)D_{L_{n-n_1+1,n-n_1-1}}(w)\\
         &\mbox { [inequality follows from Theorem \ref{Dmax} and Theorem \ref{2 cut vertices}]}\\
         &= W(G')
         \end{align*}
and equality holds if and only if $H\cong L_{n-n_1+1, n-n_1-1}$ and $w$ is the pendant vertex of $H$. If $B\cong K_2$, then $G'\cong L_{n,n-3}$ and the result follows. If $B\cong C_{n_1}$, $n_1\geq 4$, then $G'\cong C_{n_1, n_2}^n$ where $n_1+n_2+1=n$. By Lemma \ref{Comparison_3}, $W(G')< W(L_{n,n-3})$ and hence $W(G)< W(L_{n,n-3})$.\\

If $B$ shares its cut vertex with another pendant block, then by Lemma \ref{pendant-cut}, we may assume that $B\cong K_2$ and it shares its cut vertex with another pendant block $K_2$. By similar argument as above we get $W(G)\leq W(G'')$ where $G''\cong K_{1,2}\cup L_{n-2, n-4}$ with $V(K_{1,2})\cap V(L_{n-2, n-4})=\{w\}$ where $w$ is the non pendant vertex of $K_{1,2}$. The Wiener indices of $G''$ and $L_{n,n-3}$ can be computed as
\begin{align*}
W(G'')&=\begin{cases}
               \frac{1}{8}(n^3-4n^2+56n-152)    \mbox{ if $n$ is even,}\\
               \frac{1}{8}(n^3-4n^2+55n-156)    \mbox{ if $n$ is odd,}
              \end{cases}
\end{align*}
and
\begin{align*}
W(L_{n,n-3})&=\begin{cases}
               \frac{1}{8}(n^3-3n^2+38n-88)    \mbox{ if $n$ is even,}\\
               \frac{1}{8}(n^3-3n^2+39n-85)    \mbox{ if $n$ is odd.}
              \end{cases}
\end{align*}
So
\begin{align*}
W(L_{n,n-3})-W(G'')&=\begin{cases}
         		         \frac{1}{8}(n^2-18n+64)    \mbox{ if $n$ is even,}\\
                                  \frac{1}{8}(n^2-16n+71)    \mbox{ if $n$ is odd,}
                                  \end{cases}\\
                               &>0. \hskip 2cm\mbox{[as $n\geq 14$]}
\end{align*}
Thus $W(G'')< W(L_{n,n-3})$ and hence $W(G)< W(L_{n,n-3})$. This completes the proof.
\end{proof}
 
In Theorem \ref{3 cut vertices}, the graphs maximizing the Wiener index over $\mathfrak{C}_{n,3}$ for $n\geq 14$ have been characterised. The graphs having maximum Wiener index over $\mathfrak{C}_{n,3}$, $6\leq n\leq 13$ have been listed in Table \ref{MaxWI_3}.

 
 \begin{table}[h!]
\begin{center}
\begin{tabular}{|c|c|c|c|}
\hline
$n$&Graphs $G$ having maximum W.I. over $\mathfrak{C}_{n,3}$&$W(G)$\\
\hline
$6$& 
\hskip 2cm \begin{tikzpicture}[scale=.5]
\filldraw (-1,0) circle [radius=.5mm]--(0,0) circle [radius=.5mm]--(1,0) circle [radius=.5mm]--(2,0) circle [radius=.5mm]--(2.9,0.6) circle [radius=.5mm];
\filldraw (2,0)--(2.9,-0.6) circle [radius=.5mm];
\end{tikzpicture}
&$32$\\
\hline
$7$& $L_{7,4}$ and 
\hskip .3cm \begin{tikzpicture}[scale=.5]
\filldraw (0,0) circle [radius=.5mm]--(1,0) circle [radius=.5mm]--(2,0) circle [radius=.5mm]--(2.9,0.6) circle [radius=.5mm];
\filldraw (2,0)--(2.9,-0.6) circle [radius=.5mm];
\filldraw (-0.9,0.6) circle [radius=.5mm] (-0.9,-0.6) circle [radius=.5mm];
\draw (-0.9,0.6)--(0,0)--(-0.9,-0.6);
\end{tikzpicture}
& $48$\\
\hline 
$8$&
\hskip 2cm \begin{tikzpicture}[scale=.5]
\filldraw (0,0) circle [radius=.5mm]--(1,0) circle [radius=.5mm]--(2,0) circle [radius=.5mm]--(2.9,0.6) circle [radius=.5mm]--(3.8,0) circle [radius=.5mm]--(2.9,-0.6) circle [radius=.5mm];
\draw (2,0)--(2.9,-0.6); 
\filldraw (-0.9,0.6)circle [radius=.5mm]--(0,0);
\filldraw(0,0)--(-0.9,-0.6) circle [radius=.5mm];
\end{tikzpicture}
& $69$\\
\hline
$9$&$C_{4,4}^9$& $96$\\ 
\hline
$10$&$C_{4,5}^{10}$ and 
\hskip .3cm \begin{tikzpicture}[scale=.5]
\filldraw (0,0) circle [radius=.5mm]--(1,0) circle [radius=.5mm]--(2,0) circle [radius=.5mm]--(2.9,0.6) circle [radius=.5mm]--(3.8,0.6) circle [radius=.5mm]--(4.7,0) circle [radius=.5mm]--(3.8,-0.6) circle [radius=.5mm]--(2.9,-0.6) circle [radius=.5mm];
\draw (2,0)--(2.9,-0.6); 
\filldraw (-0.9,0.6)circle [radius=.5mm]--(0,0);
\filldraw(0,0)--(-0.9,-0.6) circle [radius=.5mm];
\end{tikzpicture}
&$126$\\
\hline
$11$&$C_{4,6}^{11}$&$166$\\
\hline
$12$&$C_{4,7}^{12}$ and 
\hskip .3cm \begin{tikzpicture}[scale=.5]
\filldraw (0,0) circle [radius=.5mm]--(1,0) circle [radius=.5mm]--(2,0) circle [radius=.5mm]--(2.9,0.6) circle [radius=.5mm]--(3.8,0.7) circle [radius=.5mm]--(4.7,0.6) circle [radius=.5mm]--(5.6,0) circle [radius=.5mm]--(4.7,-0.6) circle [radius=.5mm]--(3.8,-0.7) circle [radius=.5mm]--(2.9,-0.6) circle [radius=.5mm];
\draw (2,0)--(2.9,-0.6); 
\filldraw (-0.9,0.6)circle [radius=.5mm]--(0,0);
\filldraw(0,0)--(-0.9,-0.6) circle [radius=.5mm];
\end{tikzpicture}
& $209$\\
\hline
$13$ & $C_{6,6}^{13}$ and $L_{13,10}$&$264$\\
\hline
\end{tabular}
\end{center}
\label{default}
\caption{ The graphs having maximum Wiener index over $\mathfrak{C}_{n,3}$ for $6\leq n\leq 13$} \label{MaxWI_3}
\end{table}

In conclusion, the investigation into maximal graphs in $\mathfrak{C}_{n,k}$ for $k\geq 4$ presents intriguing avenues for further exploration. It is believed that there exists an integer $n_0^k$ such that the graph $L_{n,n-k}$ maximizes the Wiener index over $C_{n,k}$ for $n\geq n_0^k$. The comparison between $W(L_{n,n-k})$ and $W(C_{m_1,m_2}^n)$ for arbitrary $k$ may involve some intricate calculations. Additionally, exploring the determination of $m_1, m_2, k$ for which $W(C_{m_1,m_2}^n)$ is maximum (or minimum) over $\mathfrak{C}_{n,k}$ opens a new avenue for study, which could be helpful in complementing the current study.\\

\vskip 3cm

\end{document}